\newtheorem{theorem}{Theorem}
\newtheorem{lemma}[theorem]{Lemma}
\newtheorem{assumption}[theorem]{Assumption}
\newtheorem{example}[theorem]{Example}
\newtheorem{remark}[theorem]{Remark}
\DeclareMathOperator{\diag}{diag}
\DeclareMathOperator{\Proj}{P}
\newcommand{\argmin}{\operatornamewithlimits{argmin}}
\newcommand{\B}{\mathrm B}
\newcommand{\R}{\mathrm R}
\newcommand{\mc}[1]{\mathcal #1}
\newcommand{\bs}[1]{\boldsymbol #1}
\newcommand{\bbN}{\mathbb N}
\newcommand{\bbR}{\mathbb R}
\newcommand{\bbP}{\mathbb P}
\newcommand{\bbJ}{\mathbb J}
\newcommand{\bbE}{\mathbb E}
\newcommand{\Cov}{\mathrm{Cov}}
\newcommand{\ev}[1]{\bbE \left[  #1 \right]}
\newcommand{\prob}{\bbP}
\newcommand{\E}{\mathrm e}
\newcommand{\D}{\mathrm d}
\begin{document}
\title{Analysis of the Ensemble and\\ Polynomial Chaos Kalman Filters in\\ Bayesian Inverse Problems}
\author{Oliver G. Ernst$^\dag$, Bj\"orn Sprungk$^\dag$ and Hans-J\"org Starkloff$^\ast$\\
\small
$^\dag$ Department of Mathematics, TU Chemnitz, Germany\\
\small
$^\ast$ Fachgruppe Mathematik, University of Applied Sciences Zwickau, Germany
}
\date{}
\maketitle

%
%
%
%

%
%

\begin{abstract}
We analyze the Ensemble and Polynomial Chaos Kalman filters applied to nonlinear stationary Bayesian inverse problems.
In a sequential data assimilation setting such stationary problems arise in each step of either filter.
We give a new interpretation of the approximations produced by these two popular filters in the Bayesian context and prove that,
in the limit of large ensemble or high  polynomial degree, both methods yield approximations which converge to a well-defined random variable termed the analysis random variable.
We then show that this analysis variable is more closely related to a specific linear Bayes estimator than to the solution of the associated Bayesian inverse problem given by the posterior measure.
This suggests limited or at least guarded use of these generalized Kalman filter methods for the purpose of uncertainty quantification.
\end{abstract}

%

\pagestyle{myheadings}
\thispagestyle{plain}
\markboth{O. G. Ernst, B. Sprungk, H.-J. Starkloff}{Kalman Filters and Bayesian Inference}

%

%
%
\section{Introduction}

Due to increasing attention to uncertainty quantification (UQ) for complex systems, in particular as relates to the study and solution of partial differential equations (PDEs) with random data, interest has also focussed on inverse problems for random PDEs.
In particular, the Bayesian approach to inverse problems has become popular in this context.
From a UQ perspective the inverse problem is of tremendous interest since incorporating any available information
into the probability law of an uncertain quantity will, in general, reduce uncertainty and lead to improved stochastic models.

We consider in this work the fundamental task of inferring knowledge about an unknown element $u \in \mc X$ from a separable Hilbert space $\mc X$ by observing finite-dimensional noisy data
\begin{equation} \label{equ:ip}
	z = G(u) + \varepsilon,
\end{equation}
where $G: \mc X \to \bbR^d$ denotes the known (and deterministic) parameter-to-solution map and $\varepsilon$ the observational noise.
Adopting the Bayesian perspective, we assume a probability measure $\mu_0$ on $\mc X$ to be given describing our \emph{prior} knowledge or belief about $u$ which may be based, e.g., on physical reasoning, expert knowledge or previously collected data.
We wish to highlight the distinction between the two main tasks associated with Bayesian inverse problems, namely \emph{identification} and \emph{inference}, where the latter may include the former.
Identification refers to the task of determining an element $\hat u \in \mc X$ which best explains the observed data $z$ in accordance with given a priori assumptions, yielding a best guess or best single approximation to the unknown $u$.
By inference we mean the gain in knowledge by merging a prior probabilistic model $\mu_0$ with new information $z \in \bbR^d$ to obtain an updated  model $\mu^z$ which represents the new understanding or belief about $u$.

This incorporation of new information is realized mathematically by conditioning the prior probability measure $\mu_0$ on the event $\{G(u)+\varepsilon = z\}$ and is thus rooted in Kolmogorov's fundamental concept of \emph{conditional expectation} \cite{Rao2010}.
\emph{Bayes' rule} provides an analytic expression for the resulting conditioned or posterior distribution in terms of the prior distribution and provides the main tool in Bayesian inference and Bayesian inverse problems (BIPs).

While BIPs enjoy a number of favorable theoretical properties compared with their deterministic counterparts, i.e., they are well-posed and their solution in the form of the a posteriori measure is, in a certain sense, explicitly characterized, they do pose significant computational challenges in that they entail calculations with highly correlated and complex distributions in high-dimensional spaces.
The primary ``workhorse'' here is the Markov Chain Monte Carlo (MCMC) method \cite{Geyer2011}, whose continued improvement drives a very active field of research. 
However, MCMC simulations can be quite costly, since the chain has to run long enough to give sufficiently accurate estimates and each iteration typically requires one evaluation of the forward map $G$, e.g., one PDE solve.
Thus, for online monitoring or control of complex dynamical systems such as arise in weather forecasting or oil reservoir management, MCMC methods are prohibitively expensive, and filtering methods like the Kalman filter or the Ensemble Kalman filter are often applied to the associated state or parameter estimation problem.
Moreover, in dynamical systems where observational data arrives sequentially in time, Kalman filter-type methods provide the significant advantage that their recursive structure is adapted to this sequential availability of data (see \cite[Section 5.4]{Stuart2010} for a nice discussion of this issue).
So far, the Kalman filter (KF) \cite{Kalman1960} and its generalizations have mainly been used for state estimation, i.e., for identification rather than for Bayesian inference for quantifying uncertainty.
In recent years, however, these methods have drawn the attention of the growing UQ community, e.g. \cite{IglesiasEtAl2013a, IglesiasEtAl2013b, LawStuart2012}, and are being increasingly applied also to Bayesian inverse problems.
The point of departure is typically the Ensemble Kalman filter (EnKF) \cite{Evensen1994}, an extension of the KF to nonlinear models of type \eqref{equ:ip}.
As an example of this development, the authors of \cite{BlanchardEtAl2010, PajonkEtAl2012, RosicEtAl2012, RosicEtAl2013, SaadEtAl2007, SaadGhanem2009} have combined the idea of the EnKF with the computationally attractive representation of random variables in a polynomial chaos expansion to develop an efficient method for Bayesian inverse problems.
In place of (deterministic) state estimation, these methods model the uncertain state as a random variable which is updated with the arrival of each new set of observations.
We will refer to this approach in the following as the Polynomial Chaos Kalman filter (PCKF).
It was the study of this new PCKF method which motivated this work, because, although its authors gave a motivation for deriving their algorithm, the random variable approximated by the PCKF is not clearly characterized.
The same is true for the EnKF: Despite its many documented applications a detailed description of the nature or distribution of the analysis ensemble produced by one EnKF update is still lacking.
Only occasional hints that the EnKF generally fails to yield an ensemble distributed according to the posterior measure can be found in the literature.

The present work fills this gap and clarifies the stochastic model underlying the EnKF and PCKF.
We determine the precise quantities approximated by the EnKF and PCKF and how these approximations relate to the solution of Bayesian inverse problems and Bayes estimators.
In addition, we prove convergence results for both methods in the limit of increasing ``resolution'', i.e., for large ensemble size for the EnKF and large polynomial degree for the PCKF, respectively.
The question of convergence of the EnKF or PCKF is also missing in the literature so far.
To the authors' knowledge, the only related result is \cite{MandelEtAl2011}, where the convergence of the EnKF applied to data assimilation in linear, dynamical systems was studied.

The remainder of this paper is organized as follows:
Section~\ref{sec:BIP} briefly recalls the Bayesian approach to inverse problems as well as Bayes estimators.
In Section \ref{sec:GKF_Analysis} we describe and analyze the EnKF and PCKF.
In particular, we prove that the approximations provided by these generalized Kalman filtering methods converge to a certain \emph{analysis random variable}.
A characterization of this analysis random variable in light of Bayes estimators is further given in Section \ref{sec:GKF_Bayes} where we show that its distribution, in general, differs from the desired posterior measure.
Moreover, we illustrate the performance of the EnKF and PCKF and the difference between their approximations and the solution of the Bayesian inverse problem for a simple 1D boundary value problem and a simple dynamical system in Section \ref{sec:Exams}.
Section \ref{sec:Conclusions} provides a summary and conclusion.

%
%
\section{Bayesian Inverse Problems and Bayes Estimators} \label{sec:BIP}

In this section we introduce the basic concepts of the Bayesian approach to inverse problems.
Throughout, let $|\cdot|$ denote the Euclidean norm on $\bbR^d$, $\|\cdot\|$ the norm and $\langle \cdot, \cdot \rangle$ the inner product in a general separable Hilbert space $\mc X$, 
and $\mc Y$ a second separable Hilbert space.
By $\mc L(\mc X, \mc Y)$ we denote the set of all bounded linear operators $A:\mc X \to \mc Y$.
Note that $\mc L(\mc X, \mc Y)$ is isometrically isomorphic to the tensor product of the Hilbert spaces $\mc X \otimes \mc Y$ \cite{LightCheney1985, ReedSimon1980}.

In order to regularize the usually ill-posed least-squares formulation
\[
	u = \argmin_{v\in\mc X} |z - G(v)|^2
\]
of the inverse problem \eqref{equ:ip}, one incorporates additional prior information about the desired $u$ into the (deterministic) identification problem by way of a regularization functional \cite{EnglEtAl2000}, $\R:\mc X \to [0,\infty]$, and solves for
\[
	u_\alpha = \argmin_{v\in\mc X} |z - G(v)|^2 + \alpha \, \R(v),
\]
where $\alpha \in [0,\infty)$ serves as a regularization parameter to be chosen wisely \cite{AnzengruberEtAl2013}.
A further possibility for regularization is to restrict $u$ to a subset or subspace $\tilde{\mc X} \subset \mc X$, e.g., by using a stronger norm of $u$ as the regularization functional.
Broadly speaking, the Bayesian approach may be viewed as yet another way of modelling prior information on $u$ and adding it to the inverse problem.
In this case we express our prior belief about $u$ through a probability distribution $\mu_0$ on the Hilbert space $\mc X$, by which a quantitative preference of some solutions $u$ over others may be given by assigning higher and lower probabilities.
However, the goal in the Bayesian approach is not the \emph{identification} of one specific $u\in\mc X$, but rather \emph{inference} on $u$, i.e., we would like to \emph{learn from the data} in a statistical or probabilistic
sense by \emph{adjusting our prior belief} $\mu_0$ about $u$ in accordance with the newly available data $z$.
The task of identification may also be achieved within the Bayesian framework through \emph{Bayes estimates} and \emph{Bayes estimators}, which are discussed in Section~\ref{sec:bayes_estimators}.

In the Bayesian setting the deterministic model \eqref{equ:ip} becomes
\begin{equation} \label{equ:ip_bayes}
	Z = G(U) + \varepsilon,
\end{equation}
where now $\varepsilon$, and hence $Z$, are $\bbR^d$-valued random variables.
For the unknown random variable $U$ with values in $\mc X$ and prior probability distribution $\mu_0$, we seek the posterior probability distribution given the available observations $Z=z$.
Before giving a precise definition of the posterior distribution we require some basic concepts from probability theory.

%
\subsection{Probability Measures and Random Variables}

Let $(\Omega, \mc F, \bbP)$ denote a probability space and $\mc B(\mc X)$ the Borel $\sigma$-algebra of $\mc X$ generated by the open sets in $\mc X$ w.r.t. $\|\cdot\|$.
A measurable mapping $X: (\Omega, \mc F) \to (\mc X, \mc B(\mc X))$ is called a random variable (RV) and the measure $\bbP_X := \bbP \circ X^{-1}$, i.e., $\bbP_X(A) = \bbP(X^{-1}(A))$ for all $A\in\mc B(\mc X)$, defines the distribution of $X$ as the push-forward measure of $\mathbb P$ under $X$.
Conversely, given a probability measure $\mu$ on $(\mc X, \mc B(\mc X))$, we mean by $X \sim \mu$ that $\bbP_X = \mu$.
Further, let $\sigma(X) \subset \mc F$ denote the $\sigma$-algebra generated by $X$, i.e., $\sigma(X) = \{X^{-1}(A): A \in \mc B(\mc X)\}$.

The Bochner space of $p$-integrable $\mc X$-valued RVs, i.e., the space of (equivalence classes of) RVs $X: \Omega \to \mc X$ such that $\int_{\Omega} \|X(\omega)\|^p \, \bbP(\D \omega) < \infty$, is denoted by $L^p(\Omega, \mc F, \bbP; \mc X)$ or simply $L^p(\mc X)$ when the context is clear.

An element $m\in \mc X$ is called the \emph{mean} of a RV $X$ if for any $x \in \mc X$ there holds $\langle x, m \rangle = \bbE[\langle x, X\rangle ]$.
Here and in the following $\bbE$ denotes the expectation operator w.r.t.\ $\bbP$.
If $X \in L^1(\Omega, \mc F, \bbP; \mc X)$ then its mean is given by the Bochner integral $m = \bbE[X] = \int_{\Omega} X(\omega) \, \bbP(\D \omega)$.
A bilinear form $C: \mc X \times \mc Y \to \bbR $ is called the \emph{covariance} $\Cov(X,Y)$ of two RVs $X: \Omega \to \mc X$ and $Y:\Omega \to \mc Y$ if it satisfies $C(x,y) = \bbE\big[ \langle x, X - \bbE[X]\rangle \, \langle y, Y - \bbE[Y]\rangle \big]$ for all $x \in \mc X$ and $y\in\mc Y$, and we set $\Cov(X) := \Cov(X,X)$.
We shall also employ the identity $\Cov(X,Y) = \bbE[ (X-\bbE[X]) \otimes (Y-\bbE[Y])]$ when convenient.
The covariance $\Cov(X,Y)$ can also be defined equivalently as an operator $\hat C: \mc X \to \mc Y$ such that $\langle \hat C x, y\rangle = C(x,y)$.
We will mainly work with the latter definition in the following but on occasion will also apply the tensor product form $\bbE[ (X-\bbE[X]) \otimes (Y-\bbE[Y])]$.
The definitions of mean and covariance extend to RVs with values in separable Banach spaces by considering the topological duals of $X$ and $Y$, respectively.

We also require the notion of distance between probability measures, one of which is given by the \emph{Hellinger metric} $d_H$: given two probability measures $\mu_1$ and $\mu_2$ on the Hilbert space $\mc X$, it is defined as
\[
	d_H(\mu_1, \mu_2)
	:=
	\left[ \int_{\mc X}
	       \left( \sqrt{ \frac{\D \mu_1}{\D \nu}(u) }
	         - \sqrt{ \frac{\D \mu_2}{\D \nu}(u) } \right)^2
	       \, \nu(\D u)
    \right]^{1/2},
\]
where $\nu$ is a dominating measure of $\mu_1$ and $\mu_2$, e.g., $\nu = (\mu_1+\mu_2)/2$.
Note that the definition of the Hellinger metric is independent of the dominating measure.
Another metric for probability measures which we will employ in the following is the \emph{Wasserstein metric}
\[
	d_W(\mu_1, \mu_2)
	:=
	\sup_{\mathrm{Lip}(f) \leq 1}
	\left| \int_{\mc X} f(u) \, \mu_1 (\D u)
	- \int_{\mc X} f(u) \, \mu_2 (\D u) \right|,
\]
where the supremum is taken over all $f:\mc X \to \bbR$ which satisfy $|f(u) - f(v)| \leq \|u-v\|$.
For relations of the Hellinger and Wasserstein metrics to other probability metrics such as total variation distance, we refer to \cite{GibbsSu2002}. \par

In the following, we will use upper case latin letters such as $X$, $Y$, $Z$, $U$ to denote RVs on Hilbert spaces and lower case latin letters like $x$, $y$, $z$, $u$ for elements in these Hilbert spaces or realizations of the associated RVs, respectively.
Greek letters such as $\varepsilon$, $\eta$ and $\xi$ will be used to denote RVs as well as their realizations and $\mu$ and $\nu$ (with various subscripts) will denote measures on the Hilbert space $\mc X$ and $\bbR^d$, respectively.

%
\subsection{Bayes' Rule and the Posterior Measure}

Bayesian inference consists in updating
our prior knowledge on the unknown quantity $U$,
reflecting a gain in knowledge due to new observations.
%
The distribution of the RV $U$, characterized by the probabilities $\bbP(U \in B)$ for $B \in \mc B(\mc X)$, quantifies in stochastic terms our knowledge about the uncertainty associated with $U$.
When new information becomes available, such as knowing that the event $Z=z$ has occurred, this is reflected in our quantitative description as the ``conditional distribution of $U$ given $\{Z = z\}$'', denoted $\bbP(U\in B | Z=z)$.
Unfortunately, $\bbP(U\in B | Z=z)$ cannot be defined in an elementary fashion when $\bbP(Z=z)=0$, in which case the  conditional distribution is defined by an integral relation.
The key concept here is that of \emph{conditional expectation}:
Given RVs $X \in L^1(\Omega,\mc F,\bbP;\mc X)$ and $Y: \Omega \to \mc Y$, we define the conditional expectation $\bbE[X | Y]$ of $X$ given $Y$ as any $\sigma(Y)$-measurable mapping $\bbE[X | Y]:\Omega \to \mc X$ which satisfies
\[
	\int_A \bbE[X | Y] \; \bbP(\D \omega) = \int_A X \; \bbP(\D \omega) \qquad \forall A \in \sigma(Y).
\]
By the Doob-Dynkin Lemma \cite[Lemma 1.13]{Kallenberg1997} there exists a measurable function $\phi: \mc Y \to \mc X$ such that $\bbE[X|Y] = \phi(Y)$ $\bbP$-almost surely.
We note that this does not determine a unique function $\phi$ but rather an equivalence class of measurable functions, where $\phi_1 \sim \phi_2$ iff $\bbP(Y\in \{ y \in \mc Y : \phi_1(y) \neq\phi_2(y) \}) = 0$.
For a specific realization $y$ of $Y$ (and a specific $\phi$), we
define
\[
   \bbE[X|Y=y] := \phi(y)\in \mc X.
\]
Setting $X = \mathbf{1}_{\{U\in B\}}$, one can then define for each fixed $B \in \mc B(\mc X)$
\begin{equation} \label{equ:cond-dist}
	\bbP(U \in B | Z=z) := \bbE[ \mathbf{1}_{\{U\in B\}} | Z=z]
\end{equation}
as an equivalence class of measurable functions $\bbR^d \to [0,1]$.
One would like to view this, conversely, as a family of probability measures with the realization $z$ as a parameter, giving the posterior distribution of $U$ resulting from having made the observation $Z=z$.
Unfortunately, this construction need not, in general, yield a probability measure for each fixed value of $z$ (cf. \cite{Rao2010}).
In case $\mc X$ is a separable Hilbert space, a function
\[
   Q : \mc B(\mc X) \times \bbR^d \to \mathbb R
\]
can be shown to exist (cf. \cite{Rao2010}) such that
\begin{enumerate} \renewcommand{\theenumi}{(\alph{enumi})}
\item \label{rcm1}
For each $z \in \bbR^d$, $Q(\cdot,z)$ is a probability measure on $(\mc X,\mc B(\mc X))$.
\item \label{rcm2}
For each $B \in \mc B(\mc X)$ the function
\[
   \bbR^d \ni z \mapsto Q(B,z)
\]
is a representative of the equivalence class
\eqref{equ:cond-dist}, i.e., it is measurable and there holds
\[
   \bbP(U \in B, Z \in A) = \int_A Q(B,z) \; \bbP_Z(\D z)
   \qquad \forall A \in \mc B(\bbR^d).
\]
\end{enumerate}
Such a function $Q$, also denoted by $\mu_{U|Z}$, is called the \emph{regular conditional distribution} of $U$ given $Z$
and is defined uniquely up to sets of $z$-values of $\bbP_Z$-measure zero.
We have thus arrived at a consistent definition of the posterior probability
$\bbP(U \in B | Z=z)$ as $\mu_{U|Z}(B,z)$.\\

It is helpful to maintain a clear distinction between \emph{conditional} and \emph{posterior} quantities: the former contain the -- as yet unrealized -- observation as a parameter, while in the latter the observation has been made.
Specifically, $\mu_{U|Z}$ is the conditional measure of $U$ conditioned on $Z$, whereas $\mu_{U|Z}(\cdot,z)$ denotes the posterior measure of $U$ for the observation $Z=z$.\\

We now recall how Bayes' rule yields an explicit expression for the regular conditional distribution $\mu_{U|Z}$.
To this end, we make the following assumptions for the model \eqref{equ:ip_bayes}.

\begin{assumption} \label{assum:A1}\hfill
\begin{enumerate}
\item
$U \sim \mu_0$, $\varepsilon \sim \nu_\varepsilon$ and $(U,\varepsilon) \sim \mu_0 \otimes \nu_\varepsilon$, i.e., $U$ and $\varepsilon$ are independent.

\item
$\nu_\varepsilon = \rho(\varepsilon) \, \D\varepsilon$ where $\rho(\varepsilon) = C \E^{- \ell(\varepsilon) }$ with $C>0$ and $\ell:\bbR^d \to \bbR^+_0$ measurable and nonnegative.
Here $\D\varepsilon$ denotes Lebesgue measure on $\bbR^d$.

\item
$G: \mc X \to \bbR^d$ is continuous.
\end{enumerate}
\end{assumption}
By Assumption~\ref{assum:A1}, the distribution $\nu_Z$ of $Z$ in \eqref{equ:ip_bayes} is determined as $\nu_Z =C\gamma(z)\D z$ where $C>0$ and
\[
	\gamma(z) := \int_{\mc X} \E^{-\ell(z- G(u))} \, \mu_0(\D u).
\]
We note that $\gamma(z)>0$ is well-defined since $0<|\E^{-\ell(z- G(u))}| \leq 1$ and $\gamma \in L^1(\bbR^d)$ due to Fubini's theorem \cite[Theorem 1.27]{Kallenberg1997}.
In particular, we have that $(U,Z) \sim \mu$ with $\mu(\D u, \D z) = C\E^{-\ell(z- G(u))} \, \mu_0(\D u) \otimes \D z$ where $\D z$ again denotes Lebesgue measure on $\bbR^d$.
Further, we introduce the \emph{potential}
\[
	\Phi(u; z) := \ell(z - G(u)),
\]
for which we assume the following Lipschitz-like property:

\begin{assumption} \label{assum:A2} 
The potential $\Phi$ is continuous in $z$ in mean-square sense w.r.t.\ $\mu_0$, i.e, there exists a nondecreasing function $\psi:[0,\infty) \to [0,\infty)$ with $\lim_{s \to 0} \psi(s) = \psi(0) = 0$ such that
\[
	\bbE \left[ | \Phi(U; z_1) - \Phi(U; z_2) |^2 \right] = \int_\mathcal{X} | \Phi(u; z_1) - \Phi(u; z_2) |^2 \, \mu_0(\D u) \leq \psi( |z_1-z_2| ).
\]
For instance, there may exist a function $\theta \in L^2(\mc X, \mc B(\mc X), \mu_0; \bbR)$ such that
\[
	| \Phi(u; z_1) - \Phi(u; z_2) | \leq \theta(u)  \, \psi( |z_1-z_2| ).
\]
\end{assumption}
Before stating the abstract version of Bayes' Rule in
Theorem~\ref{theo:cont_hellinger},
we recall the finite-dimensional case $\mc X \simeq \bbR^n$ where it can be stated in terms of densities: here $\mu_0(\D u) = \pi_0(u) \D u$, and \emph{Bayes' rule} takes the form
\[
	\pi^z(u) = \frac{1}{\gamma(z)}\,\exp(-\Phi(u; z)) \,  \pi_0(u)
\]
where $\E^{-\Phi(u; z)} = \E^{-\ell(z - G(u))}$ represents the \emph{likelihood} of observing $z$ when fixing $u$.
The denominator $\gamma(z)$ can be interpreted as a \emph{normalizing constant} to ensure $\int_{\mc X} \pi^z(u) \, \D u = 1$.
We now show that, in the general setting, Bayes' rule yields (a version of) the (regular) conditional measure $\mu_{U|Z}$ of $U$ w.r.t.\ $Z$.
The statement of Theorem \ref{theo:cont_hellinger} differs from related results in \cite[Theorem 4.2 and 6.31]{Stuart2010} insofar as we explicitly characterize the posterior measure as a version of the regular conditional distribution and as we allow also for a general prior $\mu_0$ and log-likelihood $\ell$.

\begin{theorem} \label{theo:cont_hellinger}
Let Assumptions \ref{assum:A1} and \ref{assum:A2} be satisfied and define for each $z \in \bbR^d$ a probability measure on $(\mc X, \mc B(\mc X))$ by
\begin{equation}\label{equ:posterior}
	\mu^z(\D u) := \frac 1{\gamma(z)} \exp(- \Phi(u; z))\; \mu_0(\D u).
\end{equation}
Then the mapping $Q : \mc B(\mc X) \times \bbR^d \to [0,1]$ given by
\[
	Q(B, z) := \mu^z(B) \qquad \forall B\in \mc B(\mc X)
\]
is a regular conditional distribution of $U$ given $Z$.
We call $\mu^z$ the \emph{posterior measure (of $U$ given $Z=z$)}. 
Moreover,
$\mu^z$ depends continuously on $z$ w.r.t. the Hellinger metric, i.e., for any $z_1,z_2\in\bbR^d$ with $|z_1 - z_2| \leq r$ there holds
\[
	d_H(\mu^{z_1}, \mu^{z_2}) \leq C_r(z_1)  \, \psi( |z_1-z_2| ),
\]
where $C_r(z_1) = C (1 + \min\{\gamma(z'): |z_1- z'|\leq r\}^3 )^{-1}< +\infty$.
\end{theorem}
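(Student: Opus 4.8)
The plan is to treat the two assertions separately: first that $Q(B,z)=\mu^z(B)$ meets the two defining conditions \ref{rcm1} and \ref{rcm2} of a regular conditional distribution, and then the quantitative Hellinger bound. The first part is a measurability-and-Fubini verification built on the explicitly known joint law $\mu$; the second is a direct estimate of the Hellinger integral, for which the delicate point will be controlling the normalizing constants $\gamma(z_1),\gamma(z_2)$ from below.

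\textbf{Regular conditional distribution.} Condition \ref{rcm1} is immediate: for fixed $z$ the density $u\mapsto\gamma(z)^{-1}\E^{-\Phi(u;z)}$ is nonnegative and, since $\gamma(z)=\int_{\mc X}\E^{-\Phi(u;z)}\,\mu_0(\D u)\in(0,\infty)$ was already shown to be well defined, integrates to $1$ against $\mu_0$, so $\mu^z$ is a probability measure. For \ref{rcm2} I would first observe that $(u,z)\mapsto\Phi(u;z)=\ell(z-G(u))$ is jointly measurable ($\ell$ measurable, $G$ continuous), whence by Tonelli's theorem both $\gamma$ and $z\mapsto\int_B\E^{-\Phi(u;z)}\,\mu_0(\D u)$ are measurable and $z\mapsto\mu^z(B)$ is measurable as their quotient. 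The integral identity then follows by Fubini from the explicit joint law $\mu(\D u,\D z)=C\,\E^{-\Phi(u;z)}\,\mu_0(\D u)\otimes\D z$ and the marginal $\bbP_Z(\D z)=C\gamma(z)\,\D z$: in $\mu^z(B)\,\bbP_Z(\D z)$ the two factors $\gamma(z)$ cancel, and integrating over $A$ leaves exactly $\mu(B\times A)=\bbP(U\in B,\,Z\in A)$.

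\textbf{Hellinger continuity.} Since $\mu^{z_1},\mu^{z_2}\ll\mu_0$, I would take $\mu_0$ as the dominating measure and set $g_i(u):=\E^{-\Phi(u;z_i)/2}$, so that $\D\mu^{z_i}/\D\mu_0=g_i^2/\gamma(z_i)$ and
\[
d_H(\mu^{z_1},\mu^{z_2})^2=\int_{\mc X}\left(\frac{g_1}{\sqrt{\gamma(z_1)}}-\frac{g_2}{\sqrt{\gamma(z_2)}}\right)^2\mu_0(\D u).
\]
The standard device is to insert and split,
\[
\frac{g_1}{\sqrt{\gamma(z_1)}}-\frac{g_2}{\sqrt{\gamma(z_2)}}=\frac{g_1-g_2}{\sqrt{\gamma(z_1)}}+g_2\left(\frac{1}{\sqrt{\gamma(z_1)}}-\frac{1}{\sqrt{\gamma(z_2)}}\right),
\]
and then estimate $d_H^2\le 2I_1+2I_2$ via $(a+b)^2\le 2a^2+2b^2$. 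For $I_1$ the mean value theorem and $\E^{-x/2}\le 1$ on $[0,\infty)$ give $|g_1-g_2|\le\tfrac12|\Phi(u;z_1)-\Phi(u;z_2)|$, so Assumption~\ref{assum:A2} yields $\int(g_1-g_2)^2\,\mu_0(\D u)\le\tfrac14\psi(|z_1-z_2|)$ and hence $I_1\le\psi(|z_1-z_2|)/(4\gamma(z_1))$. For $I_2$ I first control the constants: from $\gamma(z_1)-\gamma(z_2)=\int(g_1-g_2)(g_1+g_2)\,\mu_0(\D u)$, Cauchy--Schwarz together with $g_i\le1$ gives $|\gamma(z_1)-\gamma(z_2)|\le\psi(|z_1-z_2|)^{1/2}$, and expanding the difference of reciprocal square roots reduces $I_2$ to this same quantity.

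\textbf{Assembling and the main obstacle.} Both $I_1$ and $I_2$ carry negative powers of $\gamma$, so the argument hinges on a \emph{uniform} positive lower bound for the normalization. Here I would use that $\gamma$ is continuous --- which follows from the bound $|\gamma(z_1)-\gamma(z_2)|\le\psi(|z_1-z_2|)^{1/2}\to0$ just derived --- and strictly positive, so that on the compact ball $\{z':|z_1-z'|\le r\}$ it attains a positive minimum $m:=\min\{\gamma(z'):|z_1-z'|\le r\}>0$. For $|z_1-z_2|\le r$ one has $\gamma(z_1),\gamma(z_2)\ge m$, and substituting these bounds collects the entire $\gamma$-dependence into a single rational factor in $m$ that degenerates as $m\to0$; bounding this factor then delivers the announced estimate $d_H(\mu^{z_1},\mu^{z_2})\le C_r(z_1)\,\psi(|z_1-z_2|)$ with $C_r(z_1)$ in the stated form. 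I expect precisely this control of the normalizing constant --- establishing continuity and positivity of $\gamma$ and converting them into the uniform lower bound $m$ that keeps $C_r(z_1)$ finite --- to be the crux; the two integral estimates themselves are routine once the elementary bound on $|g_1-g_2|$ and Assumption~\ref{assum:A2} are in place.
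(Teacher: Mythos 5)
Your verification of the regular conditional distribution is correct and its core step is identical to the paper's: both obtain $\bbP(U\in B,\,Z\in A)=\int_A Q(B,z)\,\bbP_Z(\D z)$ from the explicit joint law $\mu(\D u,\D z)=C\,\E^{-\ell(z-G(u))}\,\mu_0(\D u)\otimes\D z$ and the cancellation of $\gamma(z)$ against $\bbP_Z(\D z)=C\gamma(z)\,\D z$. Where you genuinely diverge is the measurability of $z\mapsto Q(B,z)$: the paper proves the Hellinger continuity first (by appeal to Stuart's Theorem 4.2, with ``obvious modifications''), then invokes the relation between Hellinger and total variation distance to conclude that $z\mapsto\mu^z(B)$ is continuous, hence measurable; you instead get measurability from joint measurability of $(u,z)\mapsto\ell(z-G(u))$ and Tonelli. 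Your route is more elementary and decouples the two halves of the theorem --- with it, the conditional-distribution claim needs only Assumption~\ref{assum:A1}, not Assumption~\ref{assum:A2}, whereas in the paper's arrangement it logically rests on the continuity estimate. For the Hellinger bound the situation is reversed: the paper gives no argument at all beyond the citation, while you actually execute the intended (Stuart-style) proof --- the splitting of $g_1/\sqrt{\gamma(z_1)}-g_2/\sqrt{\gamma(z_2)}$, the elementary bound $|\E^{-a/2}-\E^{-b/2}|\le\tfrac12|a-b|$ for $a,b\ge0$, Cauchy--Schwarz for $|\gamma(z_1)-\gamma(z_2)|$, and continuity and positivity of $\gamma$ on the compact ball to obtain the uniform lower bound $m=\min\{\gamma(z'):|z_1-z'|\le r\}>0$. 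Your identification of this lower bound as the crux is accurate.

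One step does need repair. Carried out as you set it up, the estimate reads $d_H^2\le 2I_1+2I_2\le C(m)\,\psi(|z_1-z_2|)$, hence $d_H\le C(m)\,\psi(|z_1-z_2|)^{1/2}$ --- not $d_H\le C_r(z_1)\,\psi(|z_1-z_2|)$ as you assert in your final sentence. The square root cannot be dropped if one uses the displayed mean-square form of Assumption~\ref{assum:A2}, which is exactly what your bound on $I_1$ invokes; the conclusion with $\psi$ to the first power corresponds instead to the pointwise variant $|\Phi(u;z_1)-\Phi(u;z_2)|\le\theta(u)\,\psi(|z_1-z_2|)$ with $\theta\in L^2(\mu_0)$, under which $\bbE\left[|\Phi(U;z_1)-\Phi(U;z_2)|^2\right]\le\|\theta\|_{L^2}^2\,\psi(|z_1-z_2|)^2$ and your chain of inequalities does give $d_H\lesssim\psi$. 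To be fair, this exponent mismatch is inherited from the paper itself, whose theorem statement is not consistent with the mean-square form of its own assumption (and whose printed constant $C_r(z_1)$, which stays bounded as $m\to0$, also appears to be a typo --- your rational factor degenerating as $m\to0$ is the correct behaviour). Still, a complete proof must say which form of the assumption it uses and carry the exponent honestly rather than silently conflating $\psi$ and $\psi^{1/2}$.
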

\begin{proof}
Continuity with respect to the Hellinger metric is a slight generalization of \cite[Theorem 4.2]{Stuart2010} and may be proved in the same way with obvious modifications.
To show that $Q$ is a regular conditional distribution we verify the two properties \ref{rcm1} and \ref{rcm2}. 
The first follows from the construction of $\mu^z$.
For the second property, note that measurability follows from continuity.
The continuity of $\mu^z$ w.r.t. $z$ in the Hellinger metric implies also that $\mu^z(B)$ depends continuously on $z$ due to the relations between Hellinger metric and total variation distance (see \cite{GibbsSu2002}).
Finally, we have for any $A \in \mc B(\bbR^d)$ and $B\in \mc B(\mc X)$ that
\begin{align*}
   \bbP(U \in B, Z \in A)
   & =
   \int_{A\times B} \mu(\D u, \D z)
   =
   \int_{A} \int_B C\E^{-\ell(z- G(u))} \, \mu_0(\D u) \, \D z \\
   & =
   \int_A C \gamma(z) Q(B,z) \, \D z
   =
   \int_A Q(B,z) \; \bbP_Z(\D z)
\end{align*}
which completes the proof. \hfill
\end{proof}

\begin{remark}
Theorem~\ref{theo:cont_hellinger} shows that the Lipschitz-like property of the potential stated in Assumption \ref{assum:A2} carries over to the posterior for a general prior $\mu_0$ and an additive error  $\varepsilon$ with Lebesgue density proportional to $\E^{-\ell(\varepsilon)}$.
Roughly speaking, the negative log-likelihood $\ell$ and the posterior $\mu^z$ share the same local modulus of continuity.
\end{remark}

By Theorem \ref{theo:cont_hellinger} the Bayesian inverse problem is well-posed under mild conditions.
It is also possible to prove continuity of $\mu^z$ w.r.t.\ to the forward map $G$, see \cite[Section 4.4]{Stuart2010}, which is crucial when the forward map $G$ is realized by numerical approximation.

To give meaning to the mean and covariance of $U\sim \mu_0$ and $Z = G(U) + \varepsilon$, we make the further assumption that all second moments exist:
\begin{assumption} \label{assum:A3}
There holds
\[
	\int_{\mc X} \left(\right.\|u\|^2 + |G(u)|^2 \left.\right) \, \mu_0(\D u) < +\infty
	\quad \text{ and } \quad
	\int_{\bbR^d} |\varepsilon|^2 \, \nu_{\varepsilon}(\D \varepsilon) < +\infty.
\]
\end{assumption}

%
\subsection{Bayes Estimators} \label{sec:bayes_estimators}

Although the posterior measure $\mu^z$ is, by definition, the solution to the Bayesian inverse problem, it is by no means easy to compute in practice.
In special cases, such as when $G$ is linear and $\mu_0$ and $\nu_\varepsilon$ are Gaussian measures, or in the case of conjugate priors, closed-form expressions for $\mu^z$ are available.
In general, however, $\mu^z$ can only be computed in an approximate sense.
Moreover, when the dimension of $\mc X$ is large or infinite, visualizing, exploring or using $\mu^z$ for post-processing are demanding tasks.

More accessible quantities from Bayesian statistics \cite{Bernardo2003}, which are also closer in nature to the result of deterministic parameter identification procedures than the posterior measure, are \emph{point estimates} for the unknown $u$.
In the Bayesian setting a point estimate is a ``best guess'' $\hat u$ of $u$ based on posterior knowledge.
Here ``best'' is determined by a \emph{cost function} $c: \mc X \to \bbR_0^+$ satisfying $c(0) = 0$ and $c(u) \leq c(\lambda u)$ for any $u\in \mc X$ and  $\lambda \geq 1$.
This cost function describes the loss or costs $c(u - \hat u)$ incurred when $\hat u$ is substituted for (the true) $u$ for post-processing or decision making.
Also more general forms of a cost function are possible, see, e.g.,  \cite{Berger1985,Bernardo2003}.

For any realization $z \in \bbR^d$ of the observation RV $Z$ we introduce the \emph{(posterior) Bayes cost} of the estimate $\hat u$ w.r.t.\ $c$ as
\[
	\B_c(\hat u; z) := \int_{\mc X} c( u - \hat u)\, \mu^z(\D u),
\]
and define the \emph{Bayes estimate} $\hat u$ as a minimizer of this cost, i.e.,
\[
		\hat u := \argmin_{v \in \mc X} \B_c(v; z),
\]
assuming a unique minimizer exists.
The \emph{Bayes estimator} $\hat \phi: \bbR^d \to \mc X$ is then the mapping which assigns to an observation $z$ the associated Bayes estimate $\hat u$, i.e.,
\[
	\hat \phi : z \mapsto \argmin_{v \in \mc X} \B_c(v; z).
\]
We assume measurability of $\hat \phi$ in the following and
note that $\hat \phi$ is then also the minimizer of the \emph{expected} or \emph{prior Bayes cost}
\[
	\B_c(\phi)
	:=
	\ev{\B_c(\phi(Z); Z)}
	= \int_{\bbR^d} \int_{\mc X} c( u - \phi(z) )\, \mu^z(\D u) \, \nu_Z(\D z)
	= \ev{c(U - \phi(Z))},
\]
i.e., for any other measurable $\phi:\bbR^d \to \mc X$ there holds 
\[
	\ev{c(U - \hat \phi(Z))} \leq \ev{c(U - \phi(Z))}.
\]

\begin{remark}
Since $\hat \phi = \argmin_{\phi} \ev{c(U - \phi(Z))}$ it is possible to determine the estimator $\hat \phi$, and hence also the estimate $\hat u = \hat \phi(z)$ for a given $z$, without actually computing the posterior measure $\mu^z$, as the integration in $\B_c(\hat \phi)$ is carried out w.r.t.\ the prior measure.
Therefore, Bayes estimators are typically easier to compute or approximate than $\mu^z$.
\end{remark}

We now introduce two very common Bayes estimators: the \emph{posterior mean estimator} and the \emph{maximum a posteriori estimator}.

%
%
\subsubsection{Posterior Mean Estimator}

For the cost function $c(u) = \|u\|^2$ the posterior Bayes cost
\[
	\B_c(\hat u; z) = \int_{\mc X} \| u - \hat u\|^2 \, \mu^z(\D u)
\]
is minimized by the posterior mean $\hat u = u_\mathrm{CM} := \int_{\mc X} u \, \mu^z(\D u)$, since for any Hilbert space-valued RV $X$ its expectation $\bbE[X]$ is the minimizer of the functional $J_X(v) = \bbE[\|X - v\|^2]$, $v\in\mc X$.
The corresponding Bayes estimator for $c(u) = \|u\|^2$ is then given by
\[
	\hat \phi_\mathrm{CM}(z) := \int_{\mc X} u \, \mu^z(\D u).
\]
In particular, $\hat \phi_\mathrm{CM}(Z) = \bbE[U | Z ]$ holds $\bbP$-almost surely. 

\begin{remark} \label{rem:EX_in_BS}
If $\mc X$ is only a Banach space then the expectation of an $\mc X$-valued RV $X$ need not minimize the functional $J_X$, i.e., we have in general
\[
	\bbE[X] \neq \argmin_{v\in\mc X} \bbE[\|X - v\|^2].
\]
As a simple counterexample, consider $\mc X = \bbR^2$, $\|v\| = |v_1| + |v_2|$ and $X = (X_1,X_2)$ with independent random variables $X_1$, $X_2$ such that
\[
    \bbP(X_1 = -1) = p_1, \; \bbP(X_1 = 1) = 1-p_1 \text{ and }
    \bbP(X_2 = -1) = p_2, \; \bbP(X_2 = 1) = 1-p_2.
\]
Here $\bbE[X]$ minimizes $\bbE[\|X - v\|^2]$ iff $p_1 = p_2 = 0.5$.
In fact, one can show $\bbE[X] = \argmin_{v\in\mc X} \bbE[\|X - v\|^2]$ if $X$ is distributed symmetrically w.r.t its mean, i.e., if there holds $\bbP(X-\bbE[X] \in A) = \bbP(\bbE[X]-X \in A)$ for all $A\in\mc B(\mc X)$.

\end{remark}

%
%
\subsubsection{Maximum A Posteriori Estimator}

Another common estimator in Bayesian statistics is the \emph{maximum a posteriori (MAP)} estimator $\hat \phi_\mathrm{MAP}$.
For finite-dimensional $\mc X \simeq \bbR^n$ and absolutely continuous prior $\mu_0$, i.e., $\mu_0(\D u) = \pi_0(u) \D u$, the MAP estimate is defined as
\[
	\hat \phi_\mathrm{MAP}(z) = \argmin_{u \in \bbR^n} \Phi(u;z) - \log \pi_0(u)
\]
provided the minimum exists for all $z\in\bbR^d$.
For the definition of the MAP estimate via a cost function and the Bayes cost, we refer to the literature, e.g., \cite[Section 16.2]{LewisEtAl2006} or the very recent work \cite{BurgerLucka2014} for a novel approach;
for MAP estimates in infinite dimensions, we refer to \cite{DashtiEtAl2013}.

There is an interesting link between the Bayes estimator $\hat \phi_\mathrm{MAP}$ and the solution of the associated regularized least-squares problem:
If $\R: \bbR^n \to [0,\infty)$ is a
regularizing functional which satisfies $\int_{\bbR^n}  \exp( - \frac {\alpha}{ \sigma^ 2} \, \R(u))\, \D u < + \infty$, then the solution $\hat u_\alpha = \argmin_{u} |z - G(u)|^2 + \alpha \R(u)$ coincides with the MAP estimate $\hat \phi_\mathrm{MAP}(z)$ for $\varepsilon \sim N(0, \sigma^2 I)$ and $\mu_0(\D u) \propto \exp( - \frac {\alpha}{ \sigma^ 2} \, \R(u))\; \D u$.

%
%
\section{Analysis of Generalized Kalman Filters} \label{sec:GKF_Analysis}

In this section we consider Kalman filters and their application to the nonlinear Bayesian inverse problem \eqref{equ:ip_bayes}.
We begin with the classical Kalman filter for state estimation in linear dynamics and then consider two generalizations to the nonlinear setting which have been recently proposed for UQ in connection with inverse problems.
We show that both methods can be understood as different discretizations of an updating scheme for a certain RV and prove that both Kalman filter methods converge to this RV when the discretization is refined.

%
\subsection{The Kalman Filter} \label{sec:KF}

The Kalman filter \cite{Kalman1960} is a well-known method for sequential state estimation for incompletely observable, linear discrete-time dynamics
\begin{eqnarray}  \label{equ:kalman_system}
\begin{split}
	U_{n} = A_nU_{n-1} + \eta_n, \qquad
	Z_{n}  = G_{n} U_{n} + \varepsilon_{n}, \qquad n=1, 2, \ldots,
\end{split}
\end{eqnarray}
where $(U_n)_{n\in\bbN}$ denotes the unknown, unobservable state
and $(Z_n)_{n\in\bbN}$ the observable process.
The operators $A_n$ and $G_n$ are linear mappings in state space and from state to observation space, respectively, and the noise
processes  $\eta_n$, $\varepsilon_n$ are usually assumed
to have zero mean with known covariances.
In addition, the mean and covariance of $U_0$ need to be known and the RVs $U_0$, $\eta_n$, $\varepsilon_n$ are taken to be mutually independent.
Then, given observations $Z_1 = z_1, \ldots, Z_n=z_n$, the Kalman filter yields recursive equations for the minimum variance estimates $\hat u_n$ of $U_n$ and their error covariances $\Cov(U_n - \hat u_n)$, see, e.g., \cite{Catlin1989, Simon2006} for an  introduction and discussion.

Although the main advantage of the Kalman filter is its recursive structure, making it very efficient for state estimation in dynamical systems with sequentially arriving data, a detailed analysis of sequential methods is beyond the scope of this work.
We focus instead on the application of the Kalman filter and its generalizations to time-independent systems of the form \eqref{equ:ip_bayes} and, in the linear case,
\begin{equation} \label{equ:ip_kalman}
	Z = GU + \varepsilon, \qquad (U, \varepsilon) \sim \mu_0 \otimes \nu_\varepsilon.
\end{equation}
We note that \eqref{equ:ip_kalman} can be seen as one step of the dynamical system \eqref{equ:kalman_system} for $A_n \equiv I$, $\eta_n \equiv 0$ and $G_n = G$.
Conversely, the state estimation problem for $U = U_0$, $U = U_n$ or $U=(U_0,U_1,\ldots,U_n)$ in \eqref{equ:kalman_system} given $Z = (Z_1, \ldots, Z_n) = (z_1, \ldots, z_n) = z$ can be reformulated as \eqref{equ:ip_kalman}.

If $\hat u_0 = \bbE[U]$ is taken as an initial estimate for the unkown $U$ in \eqref{equ:ip_kalman} before observing $Z = z$, this results in the initial error covariance $\Cov(U - \hat u_0) = \Cov(U) =: C_0$.
Given data $Z=z$, the Kalman filter provides a new estimate $\hat u_1$ and its error covariance $C_1 = \Cov(U - \hat u_1)$ via the updates
\begin{align} \label{equ:kalman_filter}
	\hat u_1 & = \hat u_0 + K(z - G\hat u_0), \qquad
	C_1 = C_0 - KGC_0,
\end{align}
where $K = C_0G^*(GC_0G^* + \Sigma )^{-1}$, $\Sigma = \Cov(\varepsilon)$, is known as the \emph{Kalman gain}.
In fact, by assimilating the data $Z=z$ the Kalman filter produces an improved estimate, since its expected error is smaller than that of the initial estimate in the sense that $C_0 - C_1$ is positive definite.
\hfill \par
If $(U,Z)$ are jointly Gaussian RV, i.e., $U \sim N(m_0, C_0)$ and $\varepsilon \sim N(0, \Sigma)$, the posterior measure $\mu^z$ of $U$ given $Z=z$ also has a Gaussian distribution $\mu^z \sim N(m^z, C^z)$ with
\[
	m^z = m_0 + K(z - Gm_0), \qquad C^z = C_0 - KGC_0,
\]
see, e.g., \cite{Mandelbaum1984}.
Thus for $G$ linear and $U, \varepsilon$ independently Gaussian, the Kalman filter is seen to yield the solution of the Bayesian inverse problem by providing the posterior mean and covariance, which in this case also uniquely specify the Gaussian posterior measure $\mu^z$.
However, we emphasize that the Kalman filter does not directly approximate the posterior measure, it rather provides minimum variance estimates and their error covariances for linear problems \eqref{equ:kalman_system}.
Without the assumption that $\mu_0$ or $\nu_\varepsilon$ are Gaussian the Kalman filter will not, in general, yield the first two posterior moments, nor is the posterior measure necessarily Gaussian.\\ \par
\noindent In the following two subsections we consider generalizations of the Kalman filter to nonlinear problems \eqref{equ:ip_bayes}.
The historically first such method was the extended Kalman filter (EKF), which is based on local linearizations of the nonlinear map $G$, but which we will not consider here.
We rather focus on the Ensemble Kalman Filter (EnKF) introduced by Evensen \cite{Evensen1994} and the recently developed the Polynomial Chaos Kalman Filter (PCKF). 

%
\subsection{The Ensemble Kalman Filter}

Since its introduction in 1994, the EnKF has been investigated and evaluated in many publications \cite{Evensen2003, BurgersEtAl1998, Evensen2009a, Evensen2009b, MyrsethOmre2011}.
However, the focus is usually on its application to state or parameter estimation rather than solving Bayesian inverse problems.
Recently, the interest in the EnKF for UQ in inverse problems has increased, see, e.g., \cite{IglesiasEtAl2013b, IglesiasEtAl2013a, LawStuart2012}.\par

If we consider the model $Z = G(U) + \varepsilon$ with $(U, \varepsilon) \sim \mu_0 \otimes \nu_\varepsilon$ and given observations $z\in\bbR^d $, the EnKF algorithm proceeds as follows:

\begin{enumerate}
\item
\textbf{Initial ensemble:} Draw samples $u_1, \ldots, u_M$ of $U \sim \mu_0$.

\item
\textbf{Forecast:} Draw samples $\varepsilon_1, \ldots, \varepsilon_M$ of $\varepsilon \sim \nu_\varepsilon$, set
	\[
		z_j = G(u_j) + \varepsilon_j, \qquad  j = 1,\ldots,M,
	\]
yielding samples $z_1, \ldots, z_M$ of $Z\sim\nu_Z$.
\item
\textbf{Analysis:} Update the inital ensemble $\boldsymbol{u} = (u_1, \ldots, u_M)$ member by member via
	\begin{equation} \label{equ:EnKF_update}
		u^a_j = u_j + \tilde K(z -  z_j), \qquad  j = 1,\ldots,M,
	\end{equation}
	where $\tilde K = \Cov(\boldsymbol{u}, \boldsymbol{z}) \Cov(\boldsymbol{z})^{-1}$ and $\Cov(\boldsymbol{u}, \boldsymbol{z})$ and $\Cov(\boldsymbol{z})$ are the empirical covariances of the samples $\boldsymbol{u}$ and $\boldsymbol{z}=(z_1,\ldots,z_M)$, e.g.,
\[
	\Cov(\boldsymbol{u}, \boldsymbol{z}) = \frac 1{M-1} \sum_{j=1}^M (u_j - \bar{\bs u}) \otimes (z_j - \bar{\bs z}),
\]
	where $\bar{\bs u} = \frac 1M (u_1+\cdots+u_M)$ and $\bar{\bs z} = \frac 1M (z_1+\cdots+z_M)$.
	This yields an \emph{analysis ensemble} $\boldsymbol{u}^a = (u^a_1, \ldots, u^a_M)$ which in turn determines an \emph{empirical analysis measure}
	\begin{equation} \label{equ:EnKF_measure}
		\tilde \mu_M^a = \frac 1M \sum_{j=1}^M \delta_{u^a_j},
	\end{equation}
	where $\delta_{u^a_j}$ denotes the Dirac-measure at the point $u^a_j$.
	Moreover, the empirical mean of $\boldsymbol{u}^a$ serves as an estimate $\hat u$ for the unknown $u$ and the empirical covariance of $\boldsymbol{u}^a$ as an indicator for the accuracy of the estimate.
\end{enumerate}
For dynamical systems such as \eqref{equ:kalman_system}, the analysis ensemble $\boldsymbol{u}^a$ would be propagated by the system dynamics and would then serve as the initial ensemble for the subsequent step $n$.

%
\subsection{The Polynomial Chaos Kalman Filter}

In \cite{BlanchardEtAl2010, PajonkEtAl2012, RosicEtAl2012, RosicEtAl2013, SaadEtAl2007, SaadGhanem2009}
the authors propose a sampling-free Kalman filtering scheme for nonlinear systems.
Rather than updating samples of the unknown, this is carried out for the coefficient vector of a polynomial chaos expansion (PCE) of the unknown.
This necessitates the construction of a PCE distributed according to the prior measure $\mu_0$:
we assume there exist countably many independent real-valued random variables $\boldsymbol{\xi} = (\xi_m)_{m\in \bbN}$, and \emph{chaos coefficients} $u_{\boldsymbol{\alpha}} \in \mc X$, $\varepsilon_{\boldsymbol \alpha} \in \bbR^d$ for each multi-index
\[
	\boldsymbol{\alpha} \in \bbJ
	:=
	\{\boldsymbol{\alpha} \in \bbN_0^\bbN: \alpha_j \neq 0 \text{ for only finitely many } j\},
\]
such that
\[
	\sum_{\boldsymbol{\alpha} \in \bbJ} \|u_{\boldsymbol{\alpha}}\|^2 < +\infty
	\quad \text{ and } \quad
	\sum_{\boldsymbol{\alpha} \in \bbJ} |\varepsilon_{\boldsymbol{\alpha}}|^2 < +\infty,	 
\]
and
\[
	\Big( \sum_{\boldsymbol{\alpha} \in \bbJ} u_{\boldsymbol{\alpha}} P_{\boldsymbol{\alpha}} (\boldsymbol{\xi}), \quad
	\sum_{\boldsymbol{\alpha} \in \bbJ} \varepsilon_{\boldsymbol{\alpha}} P_{\boldsymbol{\alpha}}(\boldsymbol{\xi}) \Big)
	\sim \mu_0 \otimes \nu_\varepsilon.
\]
Here, $P_{\boldsymbol{\alpha}}(\boldsymbol{\xi}) = \prod_{m \geq 1} P^{(m)}_{\alpha_m}(\xi_m)$ denotes the product of univariate orthogonal polynomials $P^{(m)}_{\alpha_m}$ where we require $\{P^{(m)}_\alpha\}_{\alpha\in\bbN}$ to be a CONS in $L^2(\bbR, \mc B(\bbR) , \bbP_{\xi_m}; \bbR)$.
We note that the completeness of orthogonal polynomials will depend in general on properties of the measure $\bbP_{\xi_m}$, see \cite{ErnstEtAl2012} for a complete characterization.

We then define
$U := \sum_{\boldsymbol{\alpha} \in \bbJ} u_{\boldsymbol{\alpha}} P_{\boldsymbol{\alpha}}(\boldsymbol{\xi})$
and
$\varepsilon := \sum_{\boldsymbol{\alpha} \in \bbJ} \varepsilon_{\boldsymbol{\alpha}} P_{\boldsymbol{\alpha}}(\boldsymbol{\xi})$,
given the chaos coefficients
$(u_{\boldsymbol{\alpha}})_{\boldsymbol{\alpha} \in \bbJ}$
and
$(\varepsilon_{\boldsymbol{\alpha}})_{\boldsymbol{\alpha} \in \bbJ}$.
However, for numerical simulations we have to truncate the PCE and, therefore, introduce
the projection
\[
	\Proj_J U := \sum_{\boldsymbol{\alpha} \in J} u_{\boldsymbol{\alpha}} P_{\boldsymbol{\alpha}}(\boldsymbol{\xi}), \qquad  J \subset \bbJ.
\]
To simplify notation we further define for $J \subseteq \bbJ$ the following two RVs
\[
	U_J := \Proj_J U \quad \text{ and } \quad Z_J := \Proj_J (G(U_J) + \varepsilon).
\]
Due to the nonlinearity of $G$ there holds in general $\Proj_J G(U) \neq G(\Proj_J U) \neq \Proj_J G(U_J)$, and, hence, $Z_J \neq \Proj_J Z$!
In particular, we will consider finite subsets $J$, and for convergence studies we usually assume a monotone and exhaustive sequence of such finite subsets $(J_n)_{n\in\bbN}$, i.e, $J_m \subset J_n$ for $m \leq n$ and $J_n \uparrow \bbJ$, e.g.,
\[
	J_n 
	:= 
	\biggl\{\boldsymbol \alpha \in \bbJ: \alpha_j = 0\; \forall j > n, \sum_{j=1}^\infty |\alpha_j| \leq n\biggr\}.
\]
We note that for $n\to\infty$ the error $\|U - U_{J_n}\|_{L^2(\mc X)}$ will tend to zero since $J_n \uparrow \bbJ$.
However, the $L^2$-convergence is in general not preserved under continuous mappings (unlike convergence in the almost sure sense, in probability and in distribution).
Thus, although there holds $\|U - U_{J_n}\|_{L^2(\mc X)}\to 0$ and, of course, $\|G(U) - \Proj_{J_n} G(U)\|_{L^2(\bbR^d)} \to 0$, the continuity of $G$ does not imply $\|G(U) - \Proj_{J_n} G(U_{J_n})\|^2_{L^2(\bbR^d)} \to 0$ in general.
However, if we assume for a $\delta > 0$ that there exists $C < +\infty$ such that
\begin{align} \label{equ:G_bound_assum}
	\bbE \left[ |G(U_{J_n})|^{2+\delta} \right] \leq C \qquad \forall n\in\bbN,
\end{align}
the desired convergence of $\|Z - Z_{J_n}\|_{L^2(\bbR^d)} \to 0$ follows, see the proof of Theorem \ref{theo:PCEKF_conv} for details.\par

For the same problem considered for the EnKF, the PCKF algorithm now reads as follows:

\begin{enumerate}
\item
\textbf{Initialization:} Choose a finite subset $J\subset \bbJ$ and compute the chaos coefficients $(u_{\boldsymbol{\alpha}})_{\boldsymbol{\alpha} \in J}$ of $U\sim \mu_0$.

\item
\textbf{Forecast:} Compute the chaos coefficients $(g_{J, \boldsymbol{\alpha}})_{\boldsymbol{\alpha} \in J}$ of $G(U_J)$ and set
\[
	z_{J, \boldsymbol{\alpha}} 
	:= 
	g_{J, \boldsymbol{\alpha}} + \varepsilon_{\boldsymbol{\alpha}} 
	\qquad \forall \boldsymbol{\alpha} \in J,
\]
where $(\varepsilon_{\boldsymbol{\alpha}})_{\boldsymbol{\alpha} \in J}$ are the chaos coefficients of $\varepsilon$.
By linearity $z_{J, \boldsymbol{\alpha}}$ are the chaos coefficients of $Z_J$.
\item
\textbf{Analysis:} Update the inital chaos coefficients by
\begin{equation} \label{equ:PCEKF_update}
  u^a_{J, \boldsymbol{\alpha}} 
  := 
  u_{\boldsymbol{\alpha}} + K_J\left( \delta_{\boldsymbol{\alpha}\boldsymbol{0}}z - z_{J,\boldsymbol{\alpha}} \right) 
  \qquad \forall \boldsymbol{\alpha} \in J,
\end{equation}
where $\delta_{\boldsymbol{\alpha}\boldsymbol{0}}$ is the Kronecker symbol for multi-indices, $(\delta_{\boldsymbol{\alpha}\boldsymbol{0}}z)_{\boldsymbol{\alpha} \in J} = (z, 0, \ldots, 0)$ the chaos coefficients of the observed data $z\in\bbR^d$ and $K_J:  = \Cov(U_J, Z_J)\Cov(Z_J)^{-1}$.
The action of the covariances as linear operators can be described in the case of $\Cov(U_J, Z_J):\bbR^d \to \mc X$ by
\[
	\Cov(U_J, Z_J)x
	=
	\sum_{\boldsymbol{\alpha}\in J} \sum_{{\boldsymbol{\beta}}\in J}
	    z^\top_{J,{\boldsymbol{\beta}}} \, x \, u_{\boldsymbol{\alpha}}, \qquad x\in\bbR^d.
\]
\end{enumerate}
Thus, the result of one step of the PCKF algorithm is an \emph{analysis chaos coefficient vector} $(u^a_{\boldsymbol{\alpha}})_{\boldsymbol{\alpha}\in J}$, which in turn determines a RV
\[
	U^a_J :=  \sum_{\boldsymbol{\alpha} \in J} u^a_{J, \boldsymbol{\alpha}} P_{\boldsymbol{\alpha}}(\boldsymbol{\xi}).
\]

\begin{remark}
An expansion in polynomials $P_{\boldsymbol{\alpha}} (\boldsymbol{\xi})$ is not crucial for the application of the PCKF.
In principle, any countable CONS $(\Psi_\alpha)_{\alpha\in\bbN}$ of the space $L^2(\bbR^\bbN, \mc B(\bbR^\bbN) , \bbP_{\boldsymbol{\xi}};\bbR)$ such that $\big(\sum_{\alpha} u_{\alpha} \Psi_{\alpha} (\boldsymbol{\xi}), \sum_{\alpha} \varepsilon_{\alpha} \Psi_{\alpha} (\boldsymbol{\xi}) \big) \sim \mu_0 \otimes \nu_\varepsilon$ would be suitable.
\end{remark}

\subsection{The Analysis Variable}

Both EnKF and PCKF perform discretized versions of an update for RVs, namely,
\begin{align} \label{equ:analysis_variable}
	U^a = U + K(z - Z), \qquad K = \Cov(U,Z)\Cov(Z)^{-1},
\end{align}
where $Z:= G(U) + \varepsilon$ and $(U, \varepsilon) \sim \mu_0 \otimes \nu_\varepsilon$, providing samples $\boldsymbol{u}^a$ or chaos coefficients $u^a_{\boldsymbol{\alpha}}$ of $U^a$, respectively.
However, the output of both methods is corrupted by the approximation of the Kalman gain operator $K$ by the empirical covariances and the operator $K_J$, respectively.
That both methods do indeed converge to $U^a$ in some sense for increasing sample size $M$ or increasing chaos coefficient subset $J_n$ is shown by the next two theorems.

\begin{theorem} \label{theo:PCEKF_conv} 
Consider the model \eqref{equ:ip_bayes} and let Assumptions  \ref{assum:A1}, \ref{assum:A2} and \ref{assum:A3} be satisfied.
If $(J_n)_{n\in\bbN}$ is a monotone and exhaustive sequence of finite subsets of $\bbJ$ with $\boldsymbol 0 \in J_1$ such that \eqref{equ:G_bound_assum} holds, then $\|Z-Z_{J_n}\|_{L^2(\bbR^d)} \to 0$ for $n\to \infty$. 
Moreover, if
\[
	U^a_{J_n} 
	= 
	\sum_{\boldsymbol{\alpha} \in J_n} 
	u^a_{J_n, \boldsymbol{\alpha}} P_{\boldsymbol{\alpha}}(\boldsymbol{\xi}),
\]
denotes the RV generated by the PCKF in the analysis step for the subset $J=J_n$, 
we have
\begin{equation}
	\|U^a - U^a_{J_n}\|_{L^2(\mc X)}  \in \mc O \left( \|U-U_{J_n}\|_{L^2(\mc X)} + \|Z-Z_{J_n}\|_{L^2(\bbR^d)} \right),
\end{equation}
which means in particular that $U^a_{J_n} \to U^a$ in $L^2(\mc X)$ as $n\to\infty$.
\end{theorem}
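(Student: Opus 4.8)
The plan is to establish the two assertions separately, first proving $\|Z - Z_{J_n}\|_{L^2(\bbR^d)} \to 0$ and then deducing the big-$\mc O$ estimate from a stability analysis of the Kalman gain. For the first assertion I would split
\[
	Z - Z_{J_n}
	=
	\bigl(G(U) - \Proj_{J_n} G(U_{J_n})\bigr)
	+ \bigl(\varepsilon - \Proj_{J_n}\varepsilon\bigr)
\]
and treat the two summands in turn. The noise term $\varepsilon - \Proj_{J_n}\varepsilon$ vanishes in $L^2(\bbR^d)$ because $\varepsilon \in L^2$ and $J_n \uparrow \bbJ$. For the forward term I would insert $\Proj_{J_n}G(U)$ to obtain
\[
	G(U) - \Proj_{J_n} G(U_{J_n})
	=
	\bigl(G(U) - \Proj_{J_n} G(U)\bigr)
	+ \Proj_{J_n}\bigl(G(U) - G(U_{J_n})\bigr),
\]
whose first part tends to zero in $L^2$ (since $G(U) \in L^2$ under Assumption~\ref{assum:A3}, and $(J_n)$ is exhaustive) and whose second part is bounded by $\|G(U) - G(U_{J_n})\|_{L^2(\bbR^d)}$, as $\Proj_{J_n}$ is a contraction.

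The main obstacle is precisely the convergence $\|G(U) - G(U_{J_n})\|_{L^2(\bbR^d)} \to 0$, since, as already noted, $L^2$-convergence $U_{J_n} \to U$ together with mere continuity of $G$ does not transfer to the images in $L^2$. I would argue in two stages: convergence in $L^2(\mc X)$ implies convergence in probability, so continuity of $G$ and the continuous mapping theorem give $G(U_{J_n}) \to G(U)$ in probability; to upgrade this to $L^2$-convergence I would use uniform integrability of $\{|G(U_{J_n})|^2\}_n$, which is exactly what the bound \eqref{equ:G_bound_assum} delivers, a family bounded in $L^{2+\delta}$ having uniformly integrable squares. Convergence in probability together with uniform integrability then yields $G(U_{J_n}) \to G(U)$ in $L^2(\bbR^d)$, and the first assertion follows.

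For the second assertion I would first recast the PCKF output in random-variable form. Collecting the coefficient updates \eqref{equ:PCEKF_update} and using $P_{\boldsymbol 0} \equiv 1$ gives the identity
\[
	U^a_{J_n}
	=
	U_{J_n} + K_{J_n}(z - Z_{J_n}),
	\qquad
	K_{J_n} = \Cov(U_{J_n}, Z_{J_n})\Cov(Z_{J_n})^{-1},
\]
which is to be compared with $U^a = U + K(z - Z)$. Subtracting and rearranging the gain terms (adding and subtracting $K(z - Z_{J_n})$) yields
\[
	U^a - U^a_{J_n}
	=
	(U - U_{J_n}) + K(Z_{J_n} - Z) + (K - K_{J_n})(z - Z_{J_n}),
\]
and I would estimate the three summands in $L^2(\mc X)$ individually. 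The first is one of the two quantities appearing in the claimed estimate; the second is at most $\|K\|\,\|Z - Z_{J_n}\|_{L^2(\bbR^d)}$ with $K$ a fixed bounded operator; and the third is at most $\|K - K_{J_n}\|\,\|z - Z_{J_n}\|_{L^2(\bbR^d)}$, where the second factor is uniformly bounded since $Z_{J_n} \to Z$ in $L^2$.

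It remains to control the gain perturbation. Using bilinearity of the covariance together with the fact that centering and $\Proj_{J_n}$ are contractions in $L^2$, I would bound both $\|\Cov(U,Z) - \Cov(U_{J_n}, Z_{J_n})\|$ and $\|\Cov(Z) - \Cov(Z_{J_n})\|$ by a constant times $\|U - U_{J_n}\|_{L^2(\mc X)} + \|Z - Z_{J_n}\|_{L^2(\bbR^d)}$. Since $\Cov(Z) = \Cov(G(U)) + \Sigma$ is positive definite (as $\Sigma$ is) and $\Cov(Z_{J_n}) \to \Cov(Z)$, the inverses exist and are uniformly bounded in operator norm for large $n$, and the identity $A^{-1} - B^{-1} = A^{-1}(B-A)B^{-1}$ transfers this rate to $\|\Cov(Z)^{-1} - \Cov(Z_{J_n})^{-1}\|$. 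Expanding $K - K_{J_n}$ accordingly and using the uniform boundedness of $\Cov(U_{J_n}, Z_{J_n})$ then shows $\|K - K_{J_n}\| \in \mc O(\|U - U_{J_n}\|_{L^2(\mc X)} + \|Z - Z_{J_n}\|_{L^2(\bbR^d)})$. Combining the three summand bounds gives the asserted estimate, and $U^a_{J_n} \to U^a$ in $L^2(\mc X)$ follows at once since both error terms vanish.
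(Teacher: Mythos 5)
Your proposal is correct and follows essentially the same route as the paper's proof: the same probability-plus-uniform-integrability argument (continuous mapping theorem, then de la Vallée-Poussin via the $L^{2+\delta}$ bound) for $\|Z - Z_{J_n}\|_{L^2} \to 0$, and the same decomposition $U^a - U^a_{J_n} = (U - U_{J_n}) + K(Z_{J_n} - Z) + (K - K_{J_n})(z - Z_{J_n})$ with the gain perturbation controlled through covariance continuity and a matrix-inverse perturbation bound. The only differences are cosmetic (e.g., you use the resolvent identity $A^{-1} - B^{-1} = A^{-1}(B-A)B^{-1}$ where the paper cites a Neumann-series bound, and you justify invertibility of $\Cov(Z)$ via positive definiteness of $\Sigma$, which the paper leaves implicit).
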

\vspace{1em}
\begin{proof}
In the following we use $\|\cdot\|_{L^2}$ as shorthand for $\|\cdot\|_{L^2(\mc X)}$ and $\|\cdot\|_{L^2(\bbR^d)}$, respectively.
Since $(J_n)_{n\in\bbN}$ is exhaustive, we have $U_{J_n} \to U$ in $L^2(\mc X)$, and hence $U_{J_n} \xrightarrow{\prob} U$, where $\xrightarrow{\prob}$ denotes convergence in probability.
Since $G$ is continuous, it follows by the continuous mapping theorem \cite[Lemma 3.3]{Kallenberg1997} that also $G(U_{J_n}) \xrightarrow{\prob} G(U)$.
Now the boundedness assumption \eqref{equ:G_bound_assum} implies the uniform integrability of the RVs $|G(U_{J_n})|^2$, $n\in\bbN$, see  \cite[p. 44]{Kallenberg1997}, and by \cite[Proposition 3.12]{Kallenberg1997} we then obtain $G(U_{J_n}) \to G(U)$ in $L^2(\mc X)$.
Thus,
\[
	\|Z - Z_{J_n}\|_{L^2}
	\leq
	\underbrace{\|Z - \Proj_{J_n}Z\|_{L^2}}_{\to 0}
	+ \underbrace{\|\Proj_{J_n}(Z - G(U_{J_n}) - \varepsilon)\|_{L^2}}_{\leq \|G(U) - G(U_{J_n})\|_{L^2} \to 0}
	\to 0.
\]
Now consider $J$ as an arbitrary subset.
Since $U^a = U + K(z - Z)$ and $U_J^a = U_J + K_J(z - Z_J)$, we have
\begin{eqnarray*}
	\|U^a - U^a_J\|_{L^2} &  \leq & \|U - U_J\|_{L^2} + \|K - K_J\| \; \|z-Z_J\|_{L^2} + \|K\| \; \|Z - Z_J\|_{L^2},
\end{eqnarray*}
where the norm for $K$ and $K-K_J$ is the usual operator norm for linear mappings from $\bbR^d \to \mc X$.
It is clear that we can estimate
\[
	\|z-Z_J\|_{L^2} \leq |z| + \|Z\|_{L^2},
\]
because $\|Z_J\|_{L^2} \leq \|Z\|_{L^2}$.
Considering $\|K - K_J\|$, we can further split this error into
\begin{eqnarray*}
	\|K - K_J\| & \leq & \|\Cov(U,Z) - \Cov(U_J,Z_J)\| \; \|\Cov^{-1}(Z)\| \\
	& & \quad + \|\Cov(U_J,Z_J)\| \; \|\Cov^{-1}(Z) - \Cov^{-1}(Z_J)\|.
\end{eqnarray*}
Next, we recall that the covariance operator $\Cov(X,Y)$ depends continuously on $X$ and $Y$, in particular we have for zero-mean Hilbert space-valued RV $X_1,X_2 \in L^2(\mc X)$ and $Y_1, Y_2 \in L^2(\mc Y)$
\begin{eqnarray*}
	\| \Cov(X_1,Y_1) - \Cov(X_2,Y_2) \|
	& = &
	\| \bbE[X_1 \otimes Y_1] - \bbE[X_2 \otimes Y_2]\|\\
	& \leq &
	\bbE[ \| (X_1-X_2) \otimes Y_1\| + \|X_2 \otimes (Y_1-Y_2)\| ] \\
	& = &
	\bbE[ \|X_1-X_2\| \; \|Y_1\| ] +\bbE[ \|X_2\| \; \|Y_1-Y_2\| ] \\
	& \leq &
	(\|Y_1\|_{L^2} + \|X_2\|_{L^2})  \; (\|X_1-X_2\|_{L^2} + \|Y_1-Y_2\|_{L^2} ),
\end{eqnarray*}
where we have used Jensen's and the triangle inequality in the second line and the Cauchy-Schwartz inequality in the last line.
Since $\Cov(X,Y) = \Cov(X - \bbE[X],Y - \bbE[Y])$ and $\|X - \bbE[X]\|_{L^2} \leq \|X\|_{L^2}$ the above estimate holds also for non-zero-mean RVs.
Thus, we get
\[
	\|\Cov(U,Z) - \Cov(U_J,Z_J)\| \leq ( \|U\|_{L^2} + \|Z\|_{L^2})  \; (\|U-U_J\|_{L^2} + \|Z-Z_J\|_{L^2} )
\]
and
\[
	\|\Cov(Z) - \Cov(Z_J)\| \leq 4\|Z\|_{L^2} \; \|Z-Z_J\|_{L^2},
\]
due to $\|Z_J\|_{L^2} \leq \|Z\|_{L^2}$.
Now consider again the assumed monotone and exhaustive sequence $(J_n)_{n\in\bbN}$ and recall that, by taking a sufficiently large $n$, the error $\|U - U_{J_n}\|_{L^2}$ and $\|Z - Z_{J_n}\|_{L^2}$ can be made arbitrarily small.
Thus, also $\|\Cov(Z) - \Cov(Z_J)\|$ will tend to zero as $n\to \infty$.
We now apply now the continuity of the matrix inverses of $\Cov(Z), \Cov(Z_{J_n}) \in \bbR^{d\times d}$.
Specifically, if $n$ is sufficiently large that
\[
	\|\Cov(Z) - \Cov(Z_{J_n})\| < \frac 1{2 \|\Cov^{-1}(Z)\|},
\]
then there holds
\[
	\|\Cov^{-1}(Z) - \Cov^{-1}(Z_{J_n})\| \leq  2 \|\Cov^{-1}(Z)\|^2  \|\Cov(Z) - \Cov(Z_{J_n})\| 
\]
(see \cite[Sect. 5.8]{HornJohnson1990}).
Summing up all previous estimates, we obtain
\begin{eqnarray*}
	\|K - K_{J_n}\|
	& \leq &
	C_1 (\|U-U_{J_n}\|_{L^2} + \|Z-Z_{J_n}\|_{L^2} )
	+
	C_2 \|Z-Z_{J_n}\|_{L^2},
\end{eqnarray*}
with $C_1 = \|\Cov^{-1}(Z)\| (\|U\|_{L^2} + \|Z\|_{L^2})$ and $C_2 =  8 \|U\|_{L^2} \; \|\Cov^{-1}(Z)\|^2 \; \|Z\|_{L^2}^2$ where we have used
\[
	\|\Cov(U_J,Z_J)\| \leq \|U_J\|_{L^2} \, \|Z_J\|_{L^2} \leq \|U\|_{L^2} \|Z\|_{L^2}
\]
to obtain $C_2$.
Finally, we arrive at
\begin{eqnarray*}
	\|U^a - U^a_{J_n}\|_{L^2}
	&  \leq &
	\|U - U_{J_n}\|_{L^2} +  (|z|+ \|Z\|_{L^2}) \|K - K_{J_n}\| + \|K\| \; \|Z - Z_{J_n}\|_{L^2}\\
	& \leq & C (\|U-U_{J_n}\|_{L^2} + \|Z-Z_{J_n}\|_{L^2}),
\end{eqnarray*}
with $C = 1 + \|K\| + |z| + \|Z\|_{L^2} + C_1 + C_2$, and the assertion follows. \hfill
\end{proof}

\begin{remark}
~
Since for many applications evaluating the forward map $G$ corresponds to solving a differential or integral equation, an additional error arises due to numerical approximations $G_h$ of $G$.
This error affects the filters again by instead sampling or computing chaos coefficients of $Z_h = G_h(U) + \varepsilon$ than $Z$.
We neglect this error in our analysis since it is bayond the scope of this work.
However, if $G$ is the solution operator for differential equations, we expect that \eqref{equ:G_bound_assum} could be verified in many cases, such as for elliptic boundary value problems with $U$ a random diffusion coefficient or source term.
\end{remark}

A first convergence analysis for the EnKF when the sample size tends to infinity was carried out in \cite{MandelEtAl2011}.
There the authors considered finite-dimensional linear systems, and their main goal was to show the convergence of the ensemble mean and covariance to the true posterior mean and covariance in $L^p(\bbR^n)$ and $L^p(\bbR^{n\times n})$, respectively.
We will now show the $\bbP$-almost sure convergence of the empirical distribution $\tilde \mu^a_M$ defined by the EnKF analysis ensemble to the distribution of $U^a \sim \mu^a$.
\begin{theorem} 
Given the model \eqref{equ:ip_bayes} under Assumptions  
\ref{assum:A1}, 
\ref{assum:A2} and 
\ref{assum:A3}, 
let $(u_1^a,\ldots,u_M^a)$ denote the analysis ensemble resulting from the EnKF and $\tilde \mu_M^a$ the associated empirical measure \eqref{equ:EnKF_measure}.
Further, let $\mu^a$ denote the push-forward measure of the analysis variable $U^a$.
Then, for any $f: \mc X \to \mc Y$ which satisfies
\[
	\|f(u) - f(v)\|_{\mc Y} \leq C (1+\|u\|_{\mc X}+\|v\|_{\mc X}) \; \|u-v\|_{\mc X}
	\qquad \forall u,v \in \mc X,
\]
where $\mc Y$ is any separable Hilbert space, there holds 
\[
	\lim_{M \to \infty} \int_{\mc X} f(u)\, \tilde \mu_M^a(\D u)
	= 
	\int_{\mc X} f(u)\, \mu^a(\D u) \qquad \bbP\text{-a.s.}
\]
This implies, in particular,
\[
	\lim_{M \to \infty} \frac 1M \sum_{j=1}^M u^a_j  = \bbE[U^a] 
	\quad \text{ and } \quad 
	\lim_{M \to \infty} \frac 1M \sum_{i,j=1}^M u^a_i \otimes u^a_j = \Cov(U^a) \qquad \bbP\text{-a.s.}
\]
as well as
\[
	\bbP\left( \lim_{M\to \infty} d_W(\tilde \mu_M^a, \mu^a) = 0 \right) = 1,
\]
\end{theorem}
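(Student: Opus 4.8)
The plan is to compare the EnKF ensemble with an \emph{oracle ensemble} built from the same underlying samples but using the \emph{exact} Kalman gain $K$, and to show (i) the oracle ensemble obeys the classical strong law of large numbers, and (ii) the discrepancy between the two ensembles is controlled by $\|\tilde K - K\|$, which vanishes almost surely. Let $(u_j,\varepsilon_j)_{j\in\bbN}$ be i.i.d.\ copies of $(U,\varepsilon)\sim\mu_0\otimes\nu_\varepsilon$ and $z_j=G(u_j)+\varepsilon_j$, so that $(u_j,z_j)$ are i.i.d.\ copies of $(U,Z)$. Define the oracle analysis samples $\hat u_j^a:=u_j+K(z-z_j)$; since $U^a=U+K(z-Z)$, these are i.i.d.\ copies of $U^a$. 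Under Assumption~\ref{assum:A3} one has $U^a\in L^2(\mc X)$, and the growth hypothesis on $f$ gives $\|f(U^a)\|_{\mc Y}\le\|f(0)\|_{\mc Y}+C\|U^a\|_{\mc X}+C\|U^a\|_{\mc X}^2$, so $f(U^a)\in L^1(\mc Y)$. The strong law of large numbers for i.i.d.\ integrable random elements of a separable Hilbert space then yields $\frac1M\sum_{j=1}^M f(\hat u_j^a)\to\bbE[f(U^a)]=\int_{\mc X}f\,\D\mu^a$ $\bbP$-almost surely.

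Next I would show $\tilde K=\tilde K_M\to K$ $\bbP$-a.s. Writing the empirical covariances through sample averages, e.g.\ $\Cov(\bs u,\bs z)=\frac{M}{M-1}\big(\frac1M\sum_j u_j\otimes z_j-\bar{\bs u}\otimes\bar{\bs z}\big)$, the individual averages $\bar{\bs u}$, $\bar{\bs z}$, $\frac1M\sum u_j\otimes z_j$ and $\frac1M\sum z_j\otimes z_j$ converge a.s.\ to $\bbE[U]$, $\bbE[Z]$, $\bbE[U\otimes Z]$ and $\bbE[Z\otimes Z]$ by the same SLLN, the required integrability following from Assumption~\ref{assum:A3} and Cauchy--Schwarz. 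Hence $\Cov(\bs u,\bs z)\to\Cov(U,Z)$ and $\Cov(\bs z)\to\Cov(Z)$ a.s.; since $\Cov(Z)$ is invertible and matrix inversion is continuous there, $\Cov(\bs z)$ is eventually invertible and $\tilde K_M=\Cov(\bs u,\bs z)\Cov(\bs z)^{-1}\to K$ a.s.

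The main technical step is to bound the difference of the two empirical functionals. Using the growth condition and $u_j^a-\hat u_j^a=(\tilde K_M-K)(z-z_j)$,
\[
   \Big\|\tfrac1M\sum_{j=1}^M f(u_j^a)-\tfrac1M\sum_{j=1}^M f(\hat u_j^a)\Big\|_{\mc Y}
   \le C\,\|\tilde K_M-K\|\cdot\tfrac1M\sum_{j=1}^M\big(1+2\|u_j\|+(\|\tilde K_M\|+\|K\|)(|z|+|z_j|)\big)(|z|+|z_j|).
\]
By the SLLN the average on the right converges a.s.\ to a finite limit (its summands require only $\bbE[(|z|+|Z|)^2]<\infty$ and $\bbE[\|U\|(|z|+|Z|)]<\infty$, both guaranteed by Assumption~\ref{assum:A3}), while $\|\tilde K_M\|\to\|K\|$ stays bounded and $\|\tilde K_M-K\|\to0$ a.s. Thus the right-hand side tends to $0$ a.s., and combining with the first paragraph gives $\int f\,\D\tilde\mu_M^a=\frac1M\sum_j f(u_j^a)\to\int f\,\D\mu^a$ $\bbP$-a.s., the asserted limit. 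The genuine obstacle here is that the $u_j^a$ are \emph{not} independent, since all of them share the ensemble-wide gain $\tilde K_M$; the oracle coupling is precisely what circumvents a direct (and unavailable) SLLN for the dependent family $(u_j^a)$.

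The three consequences follow by specialization. Taking $f=\mathrm{id}_{\mc X}$ (which satisfies the growth bound with $C=1$) gives the convergence of the empirical mean to $\bbE[U^a]$, and taking $f(u)=u\otimes u$ into the separable Hilbert space $\mc X\otimes\mc X$, for which $\|u\otimes u-v\otimes v\|\le(\|u\|+\|v\|)\|u-v\|$, gives convergence of the empirical second moment; subtracting the (squared) mean yields $\Cov(U^a)$ a.s. For the Wasserstein statement I would again exploit the coupling: the diagonal coupling $\frac1M\sum_j\delta_{(u_j^a,\hat u_j^a)}$ shows $d_W(\tilde\mu_M^a,\hat\mu_M^a)\le\frac1M\sum_j\|u_j^a-\hat u_j^a\|\le\|\tilde K_M-K\|\,\frac1M\sum_j(|z|+|z_j|)\to0$ a.s., while the empirical measure $\hat\mu_M^a$ of the i.i.d.\ copies $\hat u_j^a$ converges to $\mu^a$ in $d_W$ a.s.\ (Varadarajan's theorem for weak convergence together with a.s.\ convergence of the first moments $\frac1M\sum\|\hat u_j^a\|\to\bbE\|U^a\|$). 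The triangle inequality then gives $d_W(\tilde\mu_M^a,\mu^a)\to0$ $\bbP$-a.s.
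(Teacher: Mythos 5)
Your proposal is correct and follows essentially the same route as the paper: both couple the EnKF ensemble to an ``oracle'' ensemble $u_j + K(z-z_j)$ built with the exact gain, apply the strong law of large numbers for Hilbert-space-valued random variables to that i.i.d.\ family, and control the discrepancy through the almost sure convergence $\tilde K_M \to K$ of the empirical Kalman gain (via SLLN for the empirical covariances and continuity of matrix inversion). The only differences are cosmetic --- the paper splits the error term by Cauchy--Schwarz where you pull out $\|\tilde K_M - K\|$ directly, and your explicit treatment of the Wasserstein consequence (diagonal coupling plus Varadarajan and first-moment convergence) fills in a step the paper dismisses as immediate.
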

\begin{proof}
We denote by $U_i$ and $\varepsilon_i$, $i\in\bbN$, i.i.d. RV such that $(U_i,\varepsilon_i) \sim \mu_0 \otimes \nu_\varepsilon$.
Further, we define
\[
	U_i^a := U_i + K(z-Z_i), \qquad K = \Cov(U_1,Z_1)\Cov^{-1}(Z_1),
\]
where $Z_i := G(U_i) + \varepsilon_i$, and
\[
	X_{M,i}^a := U_i + K_M (z-Z_i), \qquad K_M = \Cov(\bs U_M,\bs Z_M)\Cov^{-1}(\bs Z_M),
\]
where $\Cov(\bs U_M,\bs Z_M)$ and $\Cov(\bs Z_M)$ are empirical covariances, e.g.,
\[
	\Cov(\boldsymbol{U}_M, \boldsymbol{Z}_M) = \frac 1{M-1} \sum_{i=1}^M (U_i - \bar{\bs U}_M) \otimes (Z_i - \bar{\bs Z}_M)
\]
with $\bar{\bs U}_M = \frac 1M(U_1+\cdots+U_M)$ and  $\bar{\bs Z}_M = \frac 1M (Z_1+\cdots+Z_M)$.
Note that $(X^a_1,\ldots,X^a_M)$ represents the random analysis ensemble of the EnKF and that $U^a_i \sim \mu^a$ i.i.d.
For any function $f: \mc X \to \mc Y$ which fulfills the assumptions stated in the theorem, we have
\begin{eqnarray*}
	\frac 1M \sum_{i=1}^M f(X^a_{M,i})
	& = &
	\frac 1M \sum_{i=1}^M f(X^a_{M,i}) - f(U^a_{i}) + \frac 1M \sum_{i=1}^M f(U^a_{i})
\end{eqnarray*}
where there holds
\[
	\lim_{M\to \infty} \frac 1M \sum_{i=1}^M f(U^a_{i}) = \int_{\mc X} f(u)\, \mu^a(\D u) \qquad \bbP\text{-a.s.}
\]
due to the strong law of large numbers (SLLN) \cite{PadgettTaylor1973}.
Hence, we need only ensure that
\begin{eqnarray*}
	\left\| \frac 1M \sum_{i=1}^M f(X^a_{M,i}) - f(U^a_{i}) \right\|
	& \leq &
	\frac 1M \sum_{i=1}^M C (1+\|U^a_{i}\| + \|X^a_{M,i}\|) \; \|X^a_{M,i} - U^a_{i}\| \\
	& \leq &
	\left( \frac {C}{M} \sum_{i=1}^M (1+\|U^a_{i}\| + \|X^a_{M,i}\|)^2\right)^{1/2} \;  \left(\frac {C}{M} \; \sum_{i=1}^M \|X^a_{M,i} - U^a_{i}\|^2\right)^{1/2}
\end{eqnarray*}
converges $\bbP$-a.s. to 0 as $M\to\infty$ to prove the first statement.
We estimate
\[
	\|X^a_{M,i} - U^a_i\| \leq \|K - K_M\| \|z - Z_i\| \qquad \forall i\in\bbN,
\]
where we can further split
\begin{eqnarray*}
	K - K_M & = & \big( \Cov(U,Z) - \Cov(\bs U_M,\bs Z_M) \big) \Cov^{-1}(Z)\\
	& & \quad  + \Cov(\bs U_M,\bs Z_M) \, \left(\Cov^{-1}(Z) -  \Cov^{-1}(\bs Z_M)\right).
\end{eqnarray*}
Next, we recall that the empirical covariance converges $\bbP$-almost surely to the true covariance which follows easily (see \cite[Satz 3.14]{MuellerGronbachEtAl2012} for the scalar case) by writing
\[
	\Cov(\bs U_M,\bs Z_M)
	=
	\frac 1{M-1} \sum_{i=1}^M (U_i - \bbE[U]) \otimes (Z_i - \bbE[Z])
	- \frac M{M-1} (\bar{\bs U}_M - \bbE[U]) \otimes (\bar{\bs Z}_M - \bbE[Z]).
\]
Then by the SLLN we get
\[
	\frac 1{M-1} \sum_{i=1}^M (U_i - \bbE[U]) \otimes (Z_i - \bbE[Z])  \; \xrightarrow{M\to \infty} \; \bbE[(U - \bbE[U]) \otimes (Z - \bbE[Z]),
\]
and $\frac {M}{M-1}(\bar{\bs U}_M - \bbE[U]) \otimes (\bar{\bs Z}_M - \bbE[Z]) \xrightarrow{M\to \infty} 0$ $\bbP$-almost surely.
Thus, we have
\[
	 \Cov(U,Z) - \Cov(\bs U_M,\bs Z_M) \xrightarrow{M\to \infty} 0 \quad \text{ and } \quad \Cov(Z) -  \Cov(\bs Z_M) \xrightarrow{M\to \infty} 0
\]
$\bbP$-almost surely. Since the matrix inverse is a continuous mapping there also follows
\[
	\Cov^{-1}(Z) -  \Cov^{-1}(\bs Z_M) \xrightarrow{M\to \infty} 0 \quad \bbP\text{-a.s.}
\]
and hence, $K-K_M \to 0$ as $M\to \infty$ $\bbP$-almost surely.
We thus have for $p\in [1,2]$ $\bbP$-a.s.
\[
	\lim_{M\to \infty} X^a_{M,i} = U^a_i \quad \forall i\in\bbN \qquad \text{and} \qquad \lim_{M \to \infty} \frac 1M \sum_{i=1}^M \|X^a_{M,i} - U^a_i\|^p = 0,
\]
since by the SLLN $\frac 1M \sum_{i=1}^M \|z - Z_i\|^p$ will tend to $\bbE[\|z-Z\|^p]$ $\bbP$-almost surely.
Moreover, there holds
\[
	(1+\|U^a_{i}\| + \|X^a_{M,i}\|)^2 \leq (1+2\|U^a_{i}\| + \|X^a_{M,i}-U^a_{i}\|)^2 \leq 2(1+2\|U^a_{i}\|)^2 + \|X^a_{M,i}-U^a_{i}\|^2
\]
which yields, again by the SLLN and the above arguments,
\[
\frac {1}{M} \sum_{i=1}^M (1+\|U^a_{i}\| + \|X^a_{M,i}\|)^2 \leq \frac {1}{M} \sum_{i=1}^M (1+2\|U^a_{i}\|)^2 + \frac {1}{M} \sum_{i=1}^M \|X^a_{M,i}-U^a_i\|^2 \rightarrow \bbE[(1+2\|U^a\|)^2]
\]
as $M\to\infty$ $\bbP$-a.s.
We thus finally obtain
\[
	\left\| \frac 1M \sum_{i=1}^M f(X^a_{M,i}) - f(U^a_{i}) \right\| \xrightarrow{M\to\infty} 0 \qquad \bbP\text{-a.s.},
\]
proving the first statement of the theorem.
The remaining three then follow immediately.
\hfill
\end{proof}

%
%
\section{Bayesian Interpretation of Generalized Kalman Filters} \label{sec:GKF_Bayes}

In the previous section we have characterized the limit of the EnKF and PCKF approximations for increasing sample size or polynomial basis, respectively.
We now investigate how this limit, the analysis variable $U^a$, may be understood in the context of Bayesian inverse problems.
By analyzing the properties of this RV we are able to characterize those of the approximations provided by the two Kalman filtering methods.
In particular, we show that these do not, in general, solve the nonlinear Bayesian inverse problem, nor can they be even justified as approximations to its solution.
They are, rather, related to a linear approximation of the Bayes estimator $\hat\phi_\mathrm{CM}$ and its estimation error.

\subsection{The Linear Conditional Mean} \label{sec:LCM}

The quantity known in classical statistics as the best linear unbiased estimator (BLUE) corresponds in the Bayesian setting to the \emph{linear posterior mean estimator} $\hat \phi_\mathrm{LCM}$ defined as
\begin{equation} \label{defLCM}
	\hat \phi_\mathrm{LCM} = \argmin_{\phi \in \mc P_1(\bbR^d;\mc X)} \ev{ \|U - \phi(Z)\|^2},
\end{equation}
where $\mc P_1(\bbR^d;\mc X) = \{\phi: \phi(z) = b + Az \text{ with }b\in \mc X, A \in \mc L(\bbR^d, \mc X)\}$ denotes the set of all linear mappings from $\bbR^d$ to $\mc X$.
Moreover, we refer to the RV $\hat \phi_\mathrm{LCM}(Z)$ as the \emph{linear conditional mean}.
Recall that the conditional mean $\hat\phi_\mathrm{CM}(Z) = \bbE[U|Z]$ is the best approximation of $U$ in $L^2(\Omega, \sigma(Z), \bbP; \mc X)$ w.r.t.\ the $L^2(\mc X)$-norm.
Thus $\hat \phi_\mathrm{LCM}(Z)$ can be seen as the best approximation of $U$ in the subspace $\mc P_1(Z; \mc X) \subset L^2(\Omega,\sigma(Z), \bbP; \mc X)$, where  $\mc P_1(Z; \mc X)$ is short for $\mc P_1(\bbR^d; \mc X) \circ Z =\{\phi(Z), \phi \in \mc P_1(\bbR^d; \mc X)\}$.

\begin{lemma} \label{propo:LCM0}
The linear conditional mean as defined in \eqref{defLCM} is given by
\[
	\hat \phi_\mathrm{LCM}(z) = \ev{U} + \Cov(U,Z)\Cov(Z)^{-1}(z - \ev{Z}).
\]
\end{lemma}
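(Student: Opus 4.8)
The plan is to recognize \eqref{defLCM} as an orthogonal projection problem in the Hilbert space $L^2(\Omega,\mc F,\bbP;\mc X)$: by definition $\hat\phi_{\mathrm{LCM}}(Z)$ is the element of the affine-linear subspace $\mc P_1(Z;\mc X)$ nearest to $U$ in the $L^2(\mc X)$-norm. Existence and uniqueness of the minimizer follow from the fact that $J(b,A):=\ev{\|U-b-AZ\|^2}$ is a convex quadratic on the Hilbert space $\mc X\times\mc L(\bbR^d,\mc X)\cong\mc X\times\mc X^d$ whose quadratic part is positive definite precisely when $\Cov(Z)$ is positive definite (which I assume throughout, consistent with the appearance of $\Cov(Z)^{-1}$ in the Kalman gain); finiteness of $J$ is guaranteed by Assumption~\ref{assum:A3}. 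First I would optimize over the intercept $b$ for fixed $A$: since for any RV $W$ the map $b\mapsto\ev{\|W-b\|^2}$ is minimized at $b=\ev W$ (the fact recalled in the discussion of the posterior mean estimator), the optimal intercept is $b=\ev U-A\ev Z$, so the optimal estimator is automatically unbiased and has the form $\phi(z)=\ev U+A(z-\ev Z)$.

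Substituting this back reduces the problem to minimizing $\ev{\|\tilde U-A\tilde Z\|^2}$ over $A\in\mc L(\bbR^d,\mc X)$, where $\tilde U:=U-\ev U$ and $\tilde Z:=Z-\ev Z$ are the centred variables. The next step is to write down the first-order (normal) equations. Perturbing $A\mapsto A+tB$ and differentiating $t\mapsto\ev{\|\tilde U-(A+tB)\tilde Z\|^2}$ at $t=0$ gives the orthogonality condition that the residual be $L^2$-orthogonal to the tangent space,
\[
  \ev{\langle \tilde U-A\tilde Z,\;B\tilde Z\rangle}=0\qquad\forall\,B\in\mc L(\bbR^d,\mc X).
\]
By convexity this condition is both necessary and sufficient for a minimum, so it remains only to solve it for $A$.

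To extract $A$ I would test the normal equation against the rank-one operators $B=x\otimes y:\,w\mapsto\langle y,w\rangle_{\bbR^d}\,x$ with $x\in\mc X$, $y\in\bbR^d$ arbitrary (these span $\mc L(\bbR^d,\mc X)$ since $\bbR^d$ is finite-dimensional). Using the defining relations of the covariance operators, $\langle\Cov(U,Z)\,y,x\rangle=\ev{\langle x,\tilde U\rangle\langle y,\tilde Z\rangle}$ and $\langle A\,\Cov(Z)\,y,x\rangle=\ev{\langle A^{*}x,\tilde Z\rangle\langle y,\tilde Z\rangle}=\ev{\langle x,A\tilde Z\rangle\langle y,\tilde Z\rangle}$, the tested equation collapses to $\langle\Cov(U,Z)\,y-A\,\Cov(Z)\,y,\,x\rangle=0$ for all $x$ and $y$. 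Hence $\Cov(U,Z)=A\,\Cov(Z)$ as operators $\bbR^d\to\mc X$, and inverting $\Cov(Z)$ yields $A=\Cov(U,Z)\Cov(Z)^{-1}$. Combining with the intercept gives $\hat\phi_{\mathrm{LCM}}(z)=\ev U+\Cov(U,Z)\Cov(Z)^{-1}(z-\ev Z)$, as claimed.

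The step I expect to require the most care is the covariance-operator bookkeeping in the Hilbert-space setting: one must keep straight the typing of $\Cov(U,Z)$ as an operator $\bbR^d\to\mc X$ (versus its adjoint $\mc X\to\bbR^d$) and move between the bilinear-form and operator definitions of covariance when converting the scalar identities produced by the rank-one test functions into the single operator identity $\Cov(U,Z)=A\,\Cov(Z)$. A minor secondary point is the invertibility of $\Cov(Z)$: should it be only positive semidefinite, the minimizer is unique only modulo $\ker\Cov(Z)$ and the formula holds with $\Cov(Z)^{-1}$ replaced by the Moore--Penrose pseudoinverse; the displayed formula presumes the nondegenerate case consistent with the Kalman gain.
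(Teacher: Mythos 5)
Your proof is correct and takes essentially the same route as the paper: both arguments rest on the orthogonal-projection characterization of $\hat \phi_\mathrm{LCM}(Z)$ in $L^2(\Omega, \mc F, \bbP; \mc X)$ and reduce to the same covariance-operator identity $\Cov(U,Z) = A\,\Cov(Z)$. The only difference is direction of argument — you derive the gain by solving the normal equations (optimizing the intercept first, then testing against rank-one perturbations), whereas the paper verifies that the stated formula makes the residual $U - \phi(Z)$ orthogonal to all of $\mc P_1(Z;\mc X)$ — a presentational rather than substantive distinction.
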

\begin{proof}
The assertion follows by verifying that
\[
	\phi(Z) = \ev{U} + K(Z - \ev{Z}), \qquad K = \Cov(U,Z)\Cov(Z)^{-1},
\]
coincides with the orthogonal projection of $U$ to $\mc P_1(Z;\mc X)$.
To do so, we will show that $U - \phi(Z)$ is orthogonal to $\mc P_1(Z;\mc X)$ w.r.t. the inner product in $L^2(\Omega, \mc F, \bbP; \mc X)$.
\par
Let $b\in\mc X$ and $A \in \mc L(\bbR^d, \mc X)$ be arbitrary.
Then there holds
\begin{align*}
	\bbE\left[ \langle U - \phi(Z), b + AZ \rangle \right]
	& = \underbrace{\bbE\left[ \langle U - \bbE[U], b \rangle \right]}_{=\,0} \; + \; \bbE\left[ \langle U - \bbE[U], AZ \rangle \right] \\
	& \qquad - \bbE\left[ \langle K(Z-\bbE[Z]), AZ \rangle \right] - \underbrace{\bbE\left[ \langle K(Z-\bbE[Z]), b \rangle \right]}_{=\,0}\\
	& = \bbE\left[ \langle U - \bbE[U], A(Z-\bbE[Z]) \rangle \right] - \bbE\left[ \langle K(Z-\bbE[Z]), A(Z-\bbE[Z] \rangle \right]\\
	& = \Cov(U,Z)A^* - K\Cov(Z)A^* = 0,
\end{align*}
since 
\[
	\bbE[ \langle U - \bbE[U], A\bbE[Z] \rangle] = \bbE[ \langle K(Z - \bbE[Z]), A\bbE[Z] \rangle] = 0
\]
and $\Cov(AX,BY) = A\Cov(X,Y)B^*$ for Hilbert space valued RV $X,Y$ and bounded, linear operators $A,B$.
\end{proof}

We note that Proposition \ref{propo:LCM0} fails to hold in case $\mc X$ is only a separable Banach space, since then the expectation $\bbE[U]$ and covariance $\Cov(U,Z)$ no longer minimize $\bbE[\|U - b\|^2]$, $b\in\mc X$, and $\bbE[\|U - AZ\|^2]$, $A\in\mc L(\bbR^d, \mc X)$, respectively; see also Remark \ref{rem:EX_in_BS}.

\subsection{Interpretation of the Analysis Variable} \label{sec:Inter_AV}

Lemma~\ref{propo:LCM0} immediately yields a characterization of the analysis variable $U^a$ defined in \eqref{equ:analysis_variable}.
\begin{theorem} \label{propo:LCM}
Let Assumptions  \ref{assum:A1}, \ref{assum:A2} and \ref{assum:A3} be satisfied for the model \eqref{equ:ip_bayes}.
Then for any $z\in\bbR^d$ the \emph{analysis variable} $U^a = U + K(z -Z)$, $K = \Cov(U,Z)\Cov(Z)^{-1}$, coincides with
\[
	U^a = \hat \phi_\mathrm{LCM}(z) + (U - \hat \phi_\mathrm{LCM}(Z)).
\]
In particular, there holds
\[
	\ev{U^a} = \hat \phi_\mathrm{LCM}(z)
	\quad \text{ and } \quad
	\Cov(U^a) = \Cov(U) - K \Cov(Z, U).
\]
\end{theorem}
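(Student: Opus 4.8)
The plan is to derive the characterization directly from the definition of the analysis variable and Lemma~\ref{propo:LCM0}, treating the observed value $z$ as a fixed (deterministic) element of $\bbR^d$ while $U$ and $Z = G(U)+\varepsilon$ remain random. First I would write the Kalman gain $K = \Cov(U,Z)\Cov(Z)^{-1}$ and simply add and subtract $K\,\ev{Z}$ inside the update formula:
\begin{align*}
	U^a
	&= U + K(z - Z)\\
	&= \bigl( \ev{U} + K(z - \ev{Z}) \bigr) + \bigl( U - \ev{U} - K(Z - \ev{Z}) \bigr).
\end{align*}
By Lemma~\ref{propo:LCM0} the first bracket is exactly $\hat \phi_\mathrm{LCM}(z)$, and recalling that $\hat \phi_\mathrm{LCM}(Z) = \ev{U} + K(Z - \ev{Z})$, the second bracket is precisely $U - \hat \phi_\mathrm{LCM}(Z)$. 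This gives the claimed decomposition $U^a = \hat \phi_\mathrm{LCM}(z) + (U - \hat \phi_\mathrm{LCM}(Z))$ with essentially no computation.

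Given the decomposition, the two consequences follow by taking expectation and covariance. For the mean, I would observe that $\hat \phi_\mathrm{LCM}(z)$ is a fixed (nonrandom) element of $\mc X$, so it passes through the expectation unchanged, while the estimation error term $U - \hat \phi_\mathrm{LCM}(Z)$ has mean zero. This last fact is itself a consequence of the orthogonality established in the proof of Lemma~\ref{propo:LCM0}: taking $A = 0$ there shows $U - \hat \phi_\mathrm{LCM}(Z)$ is orthogonal to every constant $b \in \mc X$, which is equivalent to $\ev{U - \hat \phi_\mathrm{LCM}(Z)} = \vnull$. Hence $\ev{U^a} = \hat \phi_\mathrm{LCM}(z)$.

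For the covariance, since adding the deterministic shift $\hat \phi_\mathrm{LCM}(z)$ does not affect covariance, I would compute $\Cov(U^a) = \Cov(U - \hat \phi_\mathrm{LCM}(Z)) = \Cov\bigl( (U - \ev{U}) - K(Z - \ev{Z}) \bigr)$. Expanding this bilinearly yields
\[
	\Cov(U) - K\,\Cov(Z,U) - \Cov(U,Z)K^* + K\,\Cov(Z)K^*.
\]
Substituting $K = \Cov(U,Z)\Cov(Z)^{-1}$ into the last term gives $K\,\Cov(Z)K^* = \Cov(U,Z)K^* $, which cancels the third term, leaving $\Cov(U) - K\,\Cov(Z,U)$ as claimed. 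The only points requiring care, rather than a genuine obstacle, are the bookkeeping of adjoints (using $\Cov(U,Z)^* = \Cov(Z,U)$ and the self-adjointness of $\Cov(Z)$) and being explicit that Assumption~\ref{assum:A3} guarantees the relevant second moments exist so that all covariance operators are well defined.
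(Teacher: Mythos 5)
Your proof is correct and follows exactly the route the paper intends: the paper states that Theorem~\ref{propo:LCM} follows immediately from Lemma~\ref{propo:LCM0} and gives no further argument, and your add-and-subtract decomposition, the mean-zero property of the estimation error $U - \hat\phi_\mathrm{LCM}(Z)$, and the bilinear expansion of the covariance with the cancellation $K\Cov(Z)K^* = \Cov(U,Z)K^*$ are precisely the steps being left implicit. Nothing is missing; you have simply written out the computation the paper treats as immediate.
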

We summarize the consequences of Theorem~\ref{propo:LCM} as follows:\begin{itemize}
\item
The analysis variable $U^a$, to which the EnKF and the PCKF provide approximations, is the sum of a Bayes estimate $\hat \phi_\mathrm{LCM}(z)$ and the prior error $U - \hat \phi_\mathrm{LCM}(Z)$ of the corresponding Bayes estimator $\hat \phi_\mathrm{LCM}$.
\item
The mean of the EnKF analysis ensemble or PCKF analysis vector provide approximations to the linear posterior mean estimate.
How far the latter deviates from the true posterior mean depends on the model and observation $z$.
\item
The covariance approximated by the empirical covariance of the EnKF analysis ensemble,
as well as that of the PCKF analysis vector, is independent of the actual observational data $z\in\bbR^d$.
It therefore constitutes a prior rather than a posterior measure of uncertainty.
\item
In particular, the randomness in $U^a$ is entirely determined by the prior measures $\mu_0$ and $\nu_\varepsilon$.
Only the location, i.e., the mean, of $U^a$ is influenced by the observation data $z$; the randomness of $U^a$ is independent of $z$ and determined only by the projection error $U - \hat \phi_\mathrm{LCM}(Z)$ w.r.t. the prior measures.
\item
In view of the last two items, the analysis variable $U^a$, and therefore the EnKF analysis ensemble or the result of the PCKF, are in general not distributed according to the posterior measure $\mu^z$.
Moreover, the difference between $\mu^z$ and the distribution of $U^a$ depends on the data $z$ and can become quite large for nonlinear problems, see Example \ref{exam:3}.

\end{itemize}

\begin{remark}
In particular the second and third item above explain the observations made in \cite{LawStuart2012} that ``[...] (i) with appropriate parameter choices, approximate filters can perform well in reproducing the mean of the desired probability distribution, (ii) they do not perform as well in reproducing the covariance [...] ''.
\end{remark}


We illustrate the conceptual difference between the distribution of the analysis variable $U^a$ and the posterior measure $\mu^z$ with a simple yet striking example.

\begin{example} \label{exam:3}
We consider $U \sim N(0,1)$, $\varepsilon \sim N(0,\sigma^2)$ and $G(u) = u^2$.
Given data $z \in \bbR$, the posterior measure, obtained from Bayes' rule for the densities, is
\[
	\mu^z(\D u) = C \exp\left(-\frac {\sigma^2u^2 + (z-u^2)^2}{2\sigma^2}\right) \, \D u.
\]
Due to the symmetry of $\mu^z$ we have $\hat u_\mathrm{CM} = \int_{\mc X} u \, \mu^z(\D u) = 0$ for any $z\in \bbR^d$.
Thus, $\bbE[U|Z]  \equiv 0$ and $\hat \phi_\mathrm{LCM} \equiv \hat \phi_\mathrm{CM}$.
In particular, we have $K = 0$ due to
\[
	\Cov(U,Z) = \Cov(U,U^2) = \frac 1{\sqrt{2\pi}} \int_\bbR u (u^2 - 1) \E^{-u^2/2} \D u = 0,
\]
which in turn yields $U^a = U \sim N(0,1)$. Thus, the analysis variable is distributed according to the prior measure.
This is not surprising as, by definition, its mean is the best linear approximation to  the posterior mean according to $\mu^z$ and its fluctuation is simply the prior estimation error $U - \hat \phi_\mathrm{LCM}(Z) = U - 0 =U$.
This illustrates that $U^a$ is suited for approximating the posterior mean, but not appropriate as a method for uncertainty quantification for the nonlinear inverse problem.
As displayed in Figure \ref{fig:exam1}, the distribution of $U^a$ can be markedly different from the true posterior distribution.
\begin{figure}
\centering \resizebox{10cm}{!}{\includegraphics{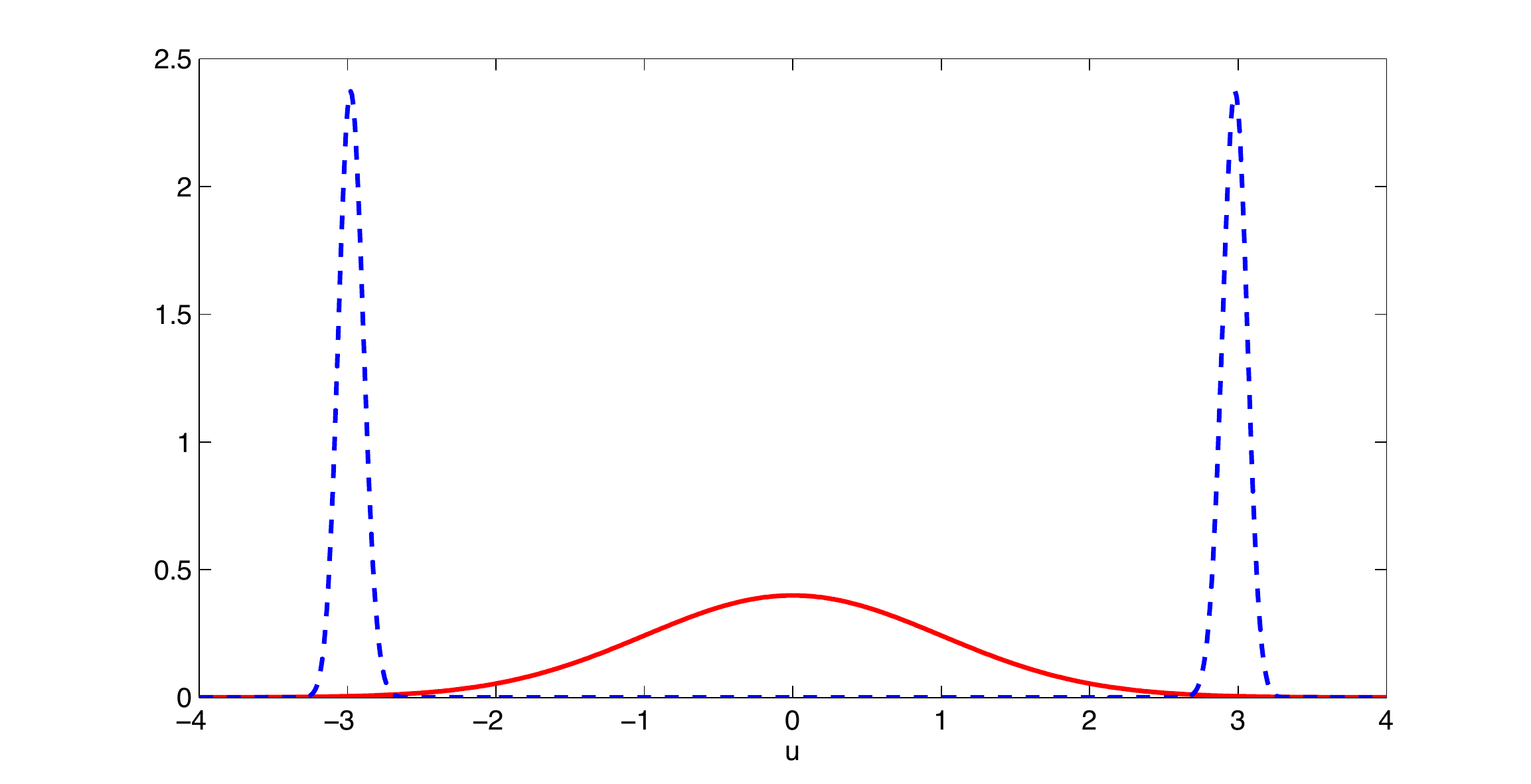}}
\caption{Density of the posterior $\mu^z$ (\textit{dashed, blue line}) and the probability density of the analysis variable $U^a$ (\textit{solid, red line}) for $z=9$ and $\sigma = 0.5$.}
\label{fig:exam1}
\end{figure}
\end{example}


\section{Numerical Examples} \label{sec:Exams}
To illustrate the application of the EnKF and PCKF to simple Bayesian inverse problems, we consider in the following a one-dimensional elliptic
boundary value problem and a time-dependent RLC circuit model.

\subsection{1D Elliptic Boundary Value Problem} \label{sec:Exam1}

Let $D=[0,1]$ and
\begin{equation} \label{equ:1D_PDE}
	-\frac {\D}{\D x} \left( \exp(u_1) \,\frac {\D}{\D x}p(x) \right) = f(x), \qquad p(0) = p_0, \quad p(1) = u_2,
\end{equation}
be given where $u = (u_1,u_2)$ are unknown parameters.
The solution of \eqref{equ:1D_PDE} is
\begin{equation} \label{equ:1D_PDE_sol}
	p(x) = p_0 + (u_2 - p_0)x + \exp(-u_1) \left( S_x(F) - S_1(F)\,x \right),
\end{equation}
where $S_x(g) := \int_0^x g(y)\,\D y$ and $F(x) = S_x(f) = \int_0^x f(y)\, \D y$.
For simplicity we choose $f\equiv 1$, $p_0 = 0$ in the following and assume noisy measurements have been made of $p$ at $x_1 = 0.25$ and $x_2 = 0.75$ with values $z=(27.5, 79.7)$.
We seek to infer $u$ based on this data and on a priori information modelled by $(u_1, u_2) \sim N(0,1) \otimes \text{Uni}(90,110)$, where $\text{Uni}(a,b)$ denotes the uniform distribution on the interval $[a,b]$.
Thus the forward map here is $G(u) = (p(x_1), p(x_2))$, where $p$ is given in \eqref{equ:1D_PDE_sol} with $f\equiv 1$ and $p_0 = 0$.
As the model for the measurement noise we take $\varepsilon \sim N(0, 0.01\, I_2)$. \par
In Figure~\ref{fig:exam2} we show the prior and posterior densities as well as $1000$ ensemble members of the initial and analysis ensemble obtained by the EnKF.
A total ensemble size of $M=10^5$ was chosen in order to reduce the sampling error to a negligible level.
It can be seen, however, that the analysis EnKF-ensemble does not follow the posterior distribution, although its mean $(-2.92, 105.14)$ is quite close to the true posterior mean $(-2.65, 104.5)$ (computed by quadrature).
\begin{figure}[h]
\hfill
\begin{minipage}{0.49\textwidth}
	\includegraphics[width = \textwidth]{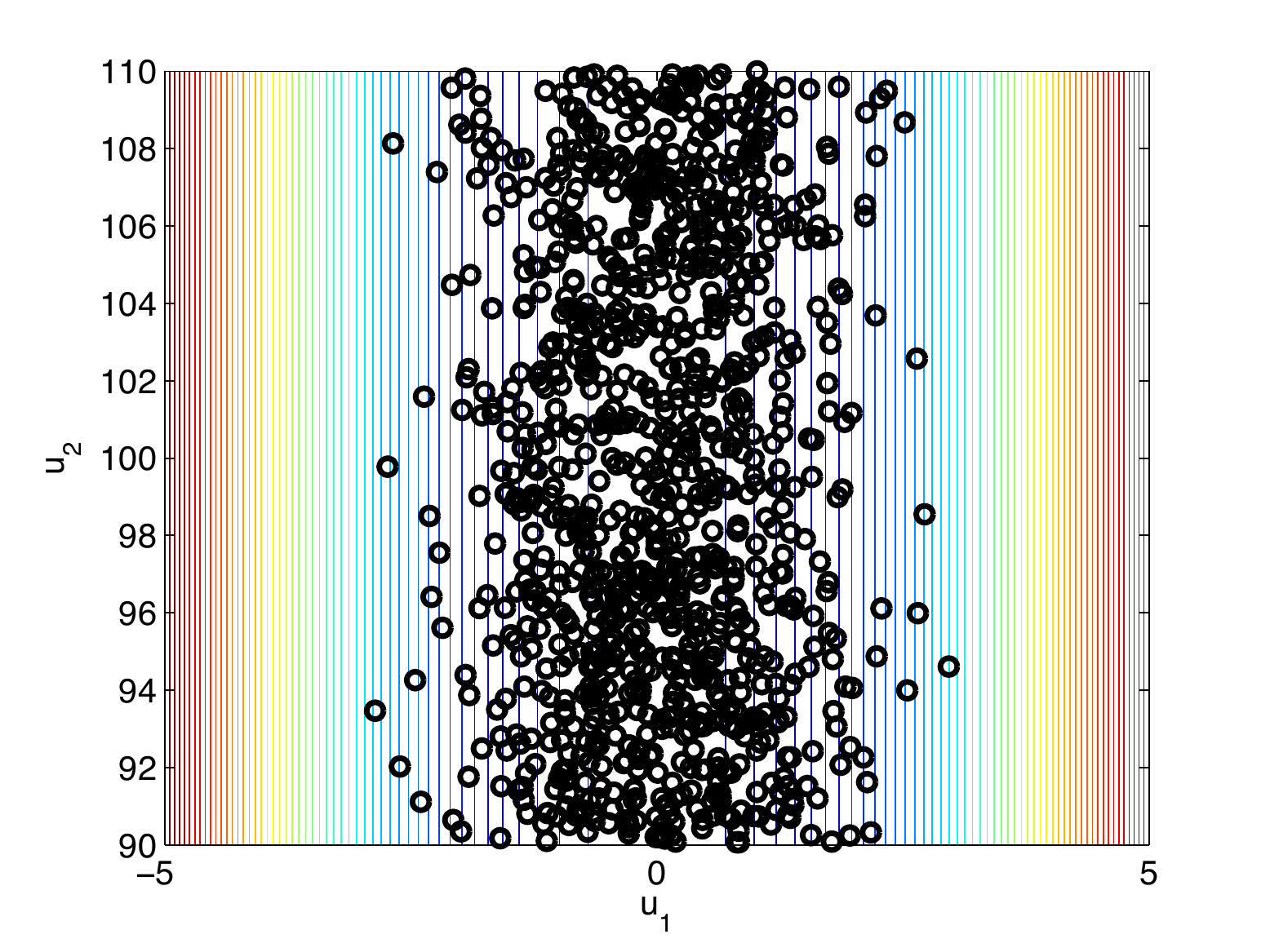}
\end{minipage}
\hfill
\begin{minipage}{0.49\textwidth}
	\includegraphics[width = \textwidth]{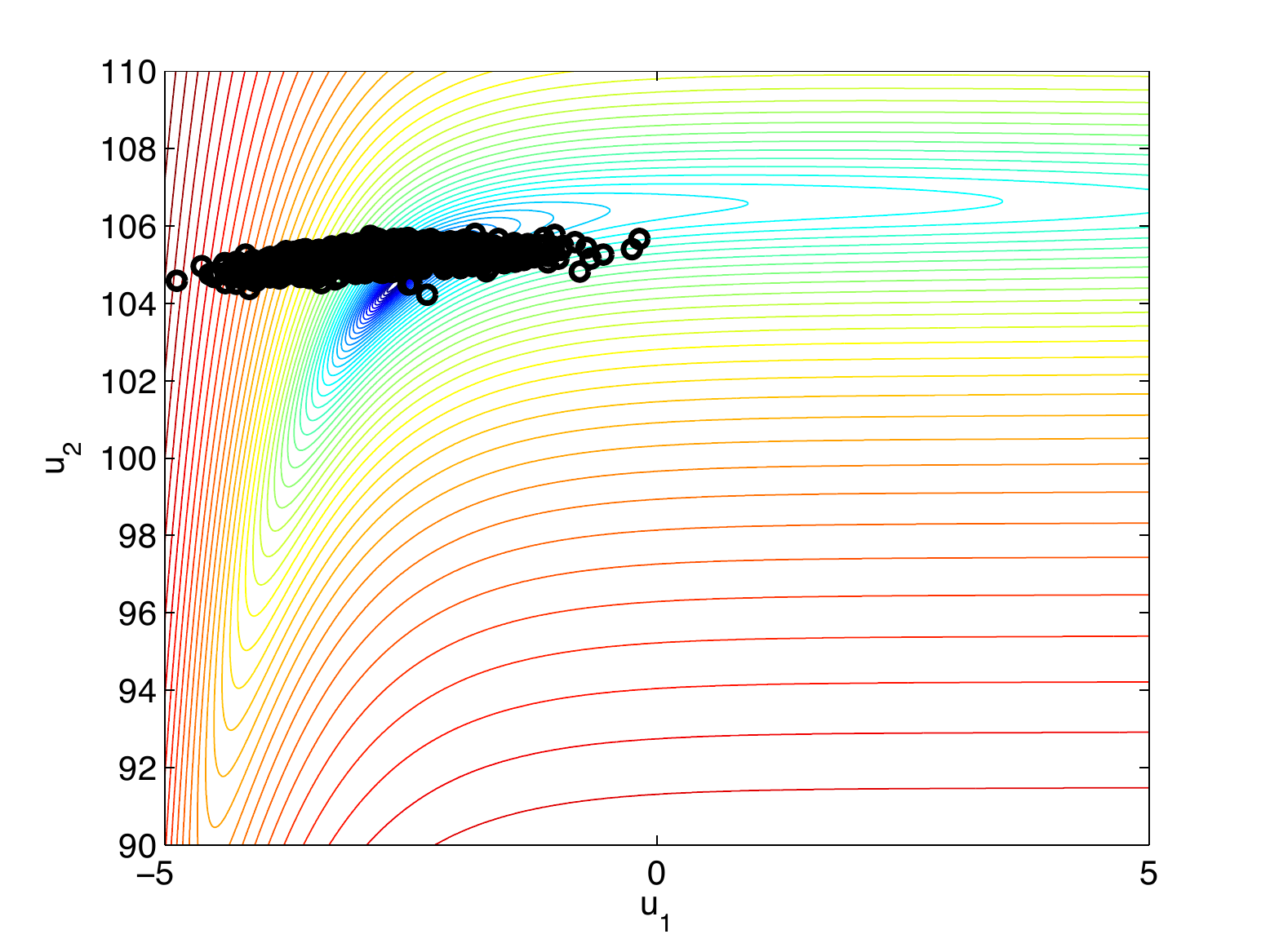}
\end{minipage}
\hfill
\caption{Left: Contour plot of the negative logarithm of the prior density and the locations of $1000$ ensemble members of the initial EnKF-ensemble.\newline 
Right: Contour plot of the logarithm of the negative logarithm of the posterior density and the locations of the updated $1,000$ ensemble members in the analysis EnKF-ensemble.}
\label{fig:exam2}
\end{figure}
To illustrate the difference between the distribution of the analysis ensemble/variable and the true posterior distribution, we present the marginal posterior distributions of $u_1$ and $u_2$ in Figure \ref{fig:exam2_marg}.
For the posterior the marginals were evaluated by quadrature, whereas for the analysis ensemble we show a relative frequency plot.

\begin{figure}[h]
\hfill
\begin{minipage}{0.49\textwidth}
	\includegraphics[width = \textwidth]{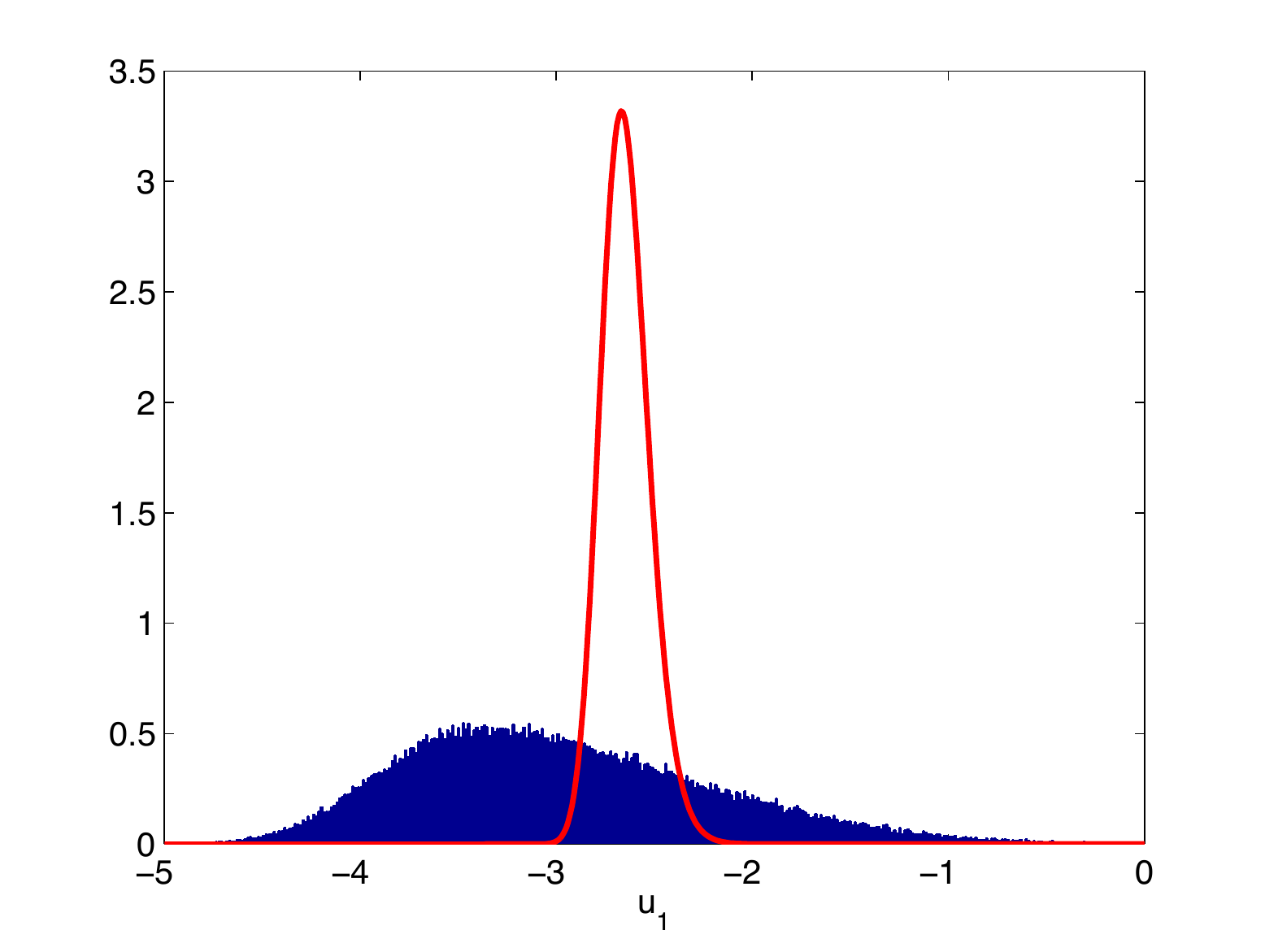}
\end{minipage}
\hfill
\begin{minipage}{0.49\textwidth}
	\includegraphics[width = \textwidth]{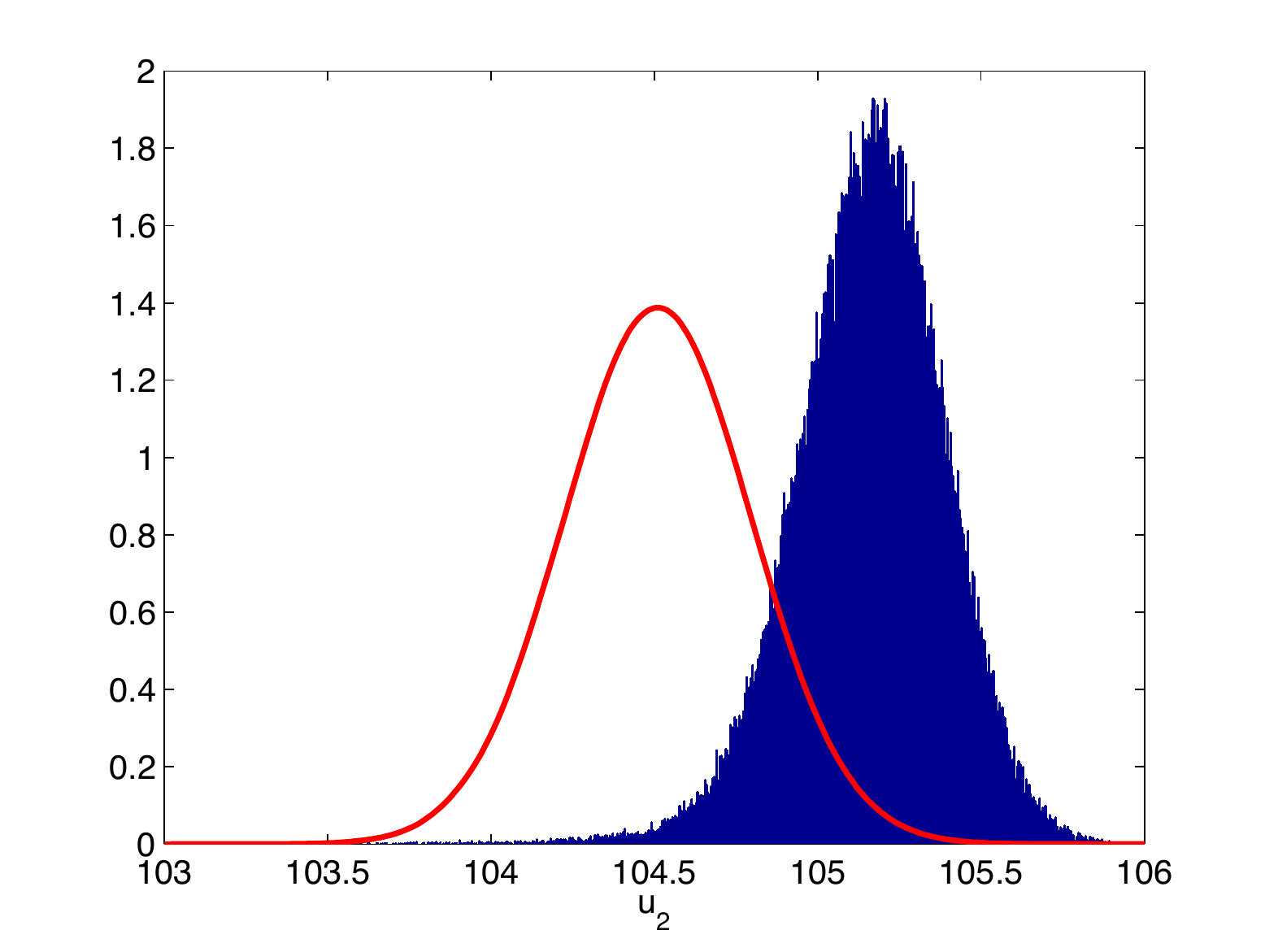}
\end{minipage}
\hfill
\caption{Posterior marginals and relative frequencies in the analysis ensemble for $u_1$ (left) and $u_2$ (right).}
\label{fig:exam2_marg}
\end{figure}
We remark that slightly changing the observational data to $\tilde z =(23.8, 71.3)$ moves the analysis ensemble
as well as the distribution of the analysis RV much closer to the true posterior, as shown in Figure~\ref{fig:exam2_obs2}.
Moreover, for these measurement values the mean of the analysis ensemble $(0.33, 94.94)$ provides a better fit to the true posterior mean $(0.33,94.94)$.

\begin{figure}[h]
\hfill
\begin{minipage}{0.32\textwidth}
	\includegraphics[width = \textwidth]{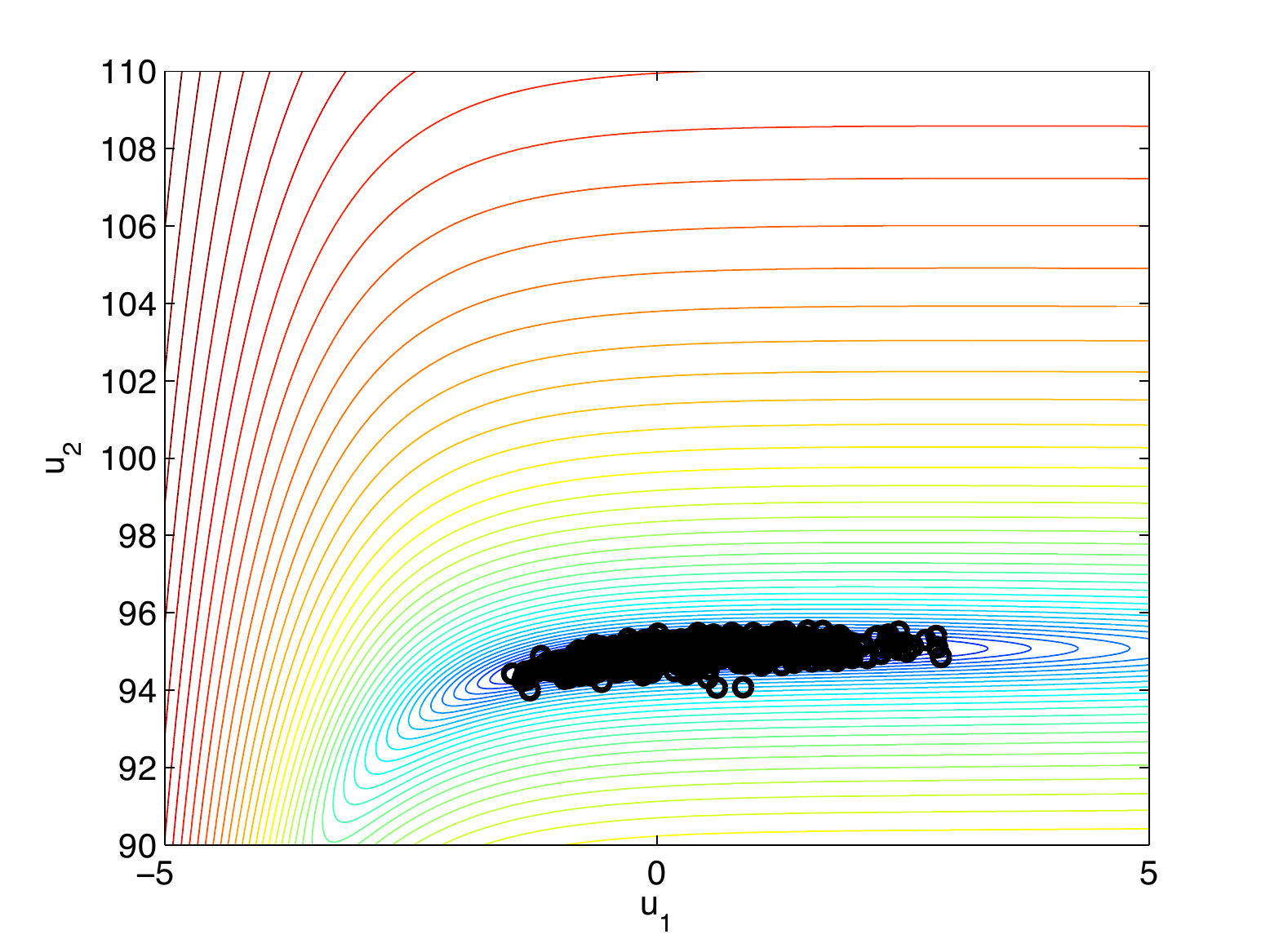}
\end{minipage}
\hfill
\begin{minipage}{0.32\textwidth}
	\includegraphics[width = \textwidth]{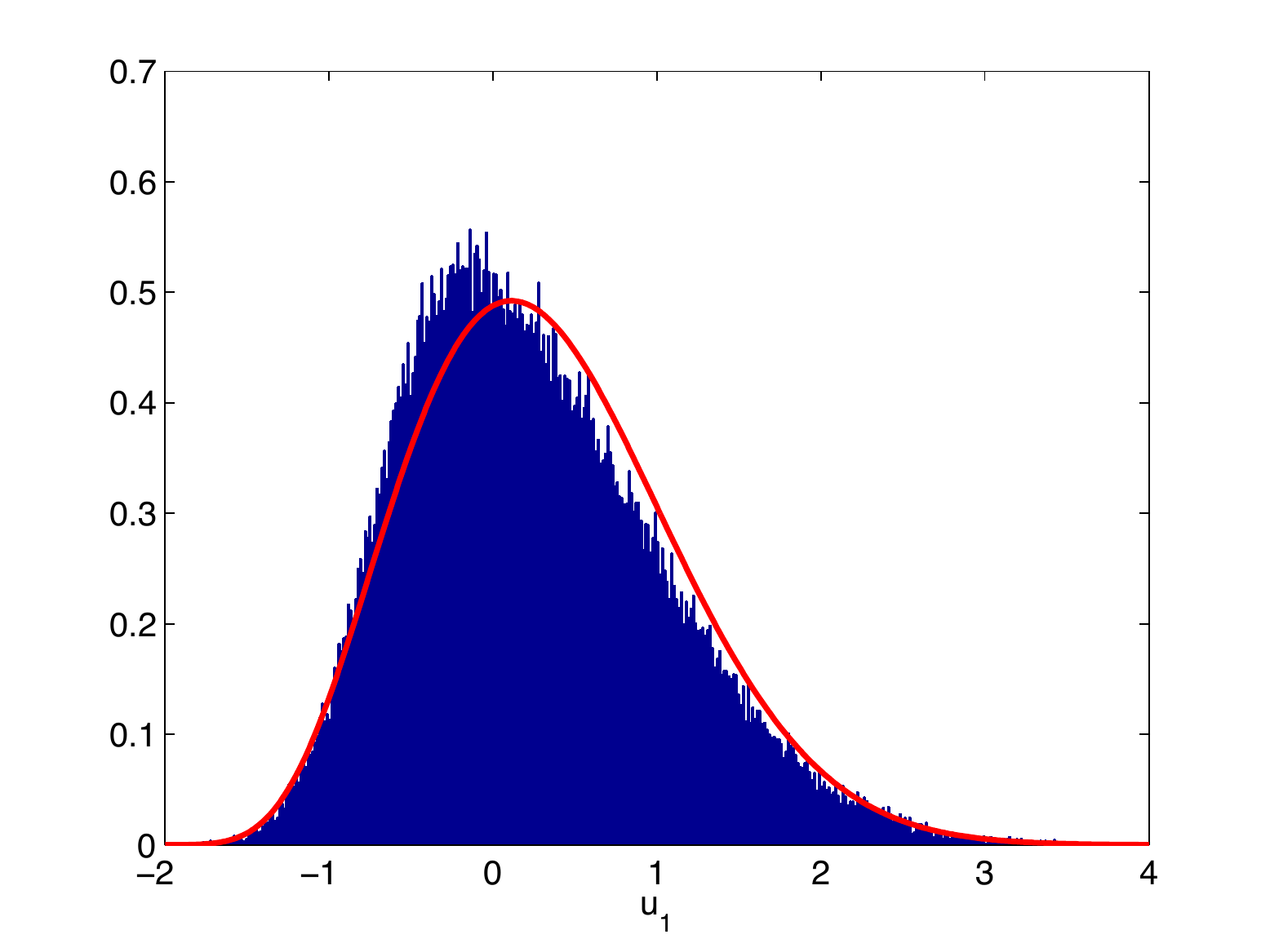}
\end{minipage}
\hfill
\begin{minipage}{0.32\textwidth}
	\includegraphics[width = \textwidth]{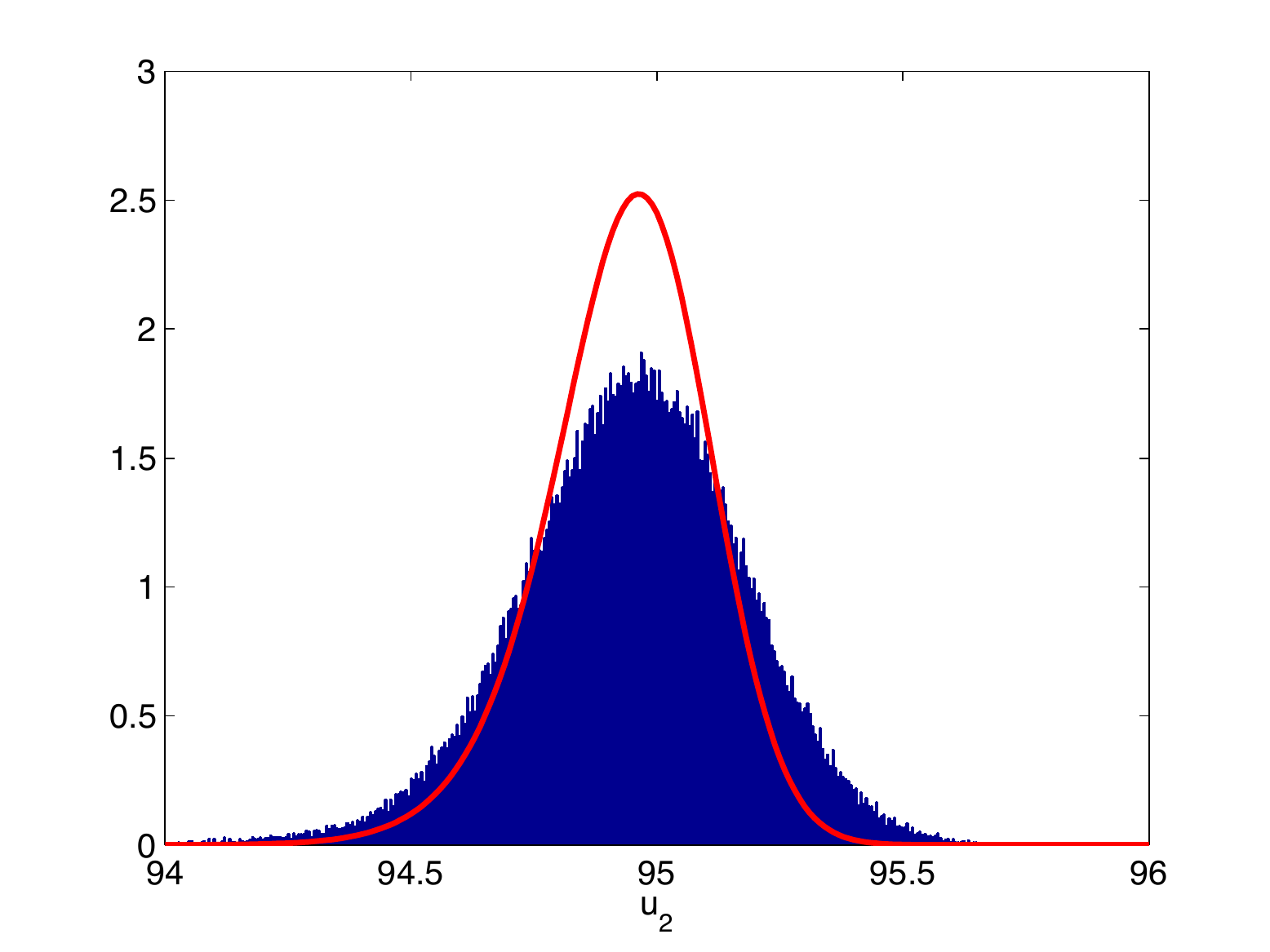}
\end{minipage}
\hfill
\caption{Left: Contours of the logarithm of the negative log posterior density and locations of $1,000$ members of the analysis EnKF-ensemble.\newline
Middle, Right: Posterior marginals and relative frequencies in the analysis ensemble for $u_1$ (middle) and $u_2$ (right).}
\label{fig:exam2_obs2}
\end{figure}

To reaffirm the fact that only the mean of the analysis variable $U^a$ depends on the actual data, we show density estimates for the marginals of $u_1$ and $u_2$ of $U^a$ in Figure \ref{fig:exam2_marg_obs} obtained from the observational data $z = (27.5, 79.7)$ (blue lines) and $\tilde z = (23.8, 71.3)$ (green lines), respectively.
The density estimates were obtained by normal kernel density estimation (KDE, in this case \textsc{Matlab}'s \texttt{ksdensity} routine) based on the resulting analysis ensembles $(\boldsymbol{u}^a_1, \boldsymbol{u}^a_2)$ and $(\tilde{ \boldsymbol{u} }^a_1, \tilde{ \boldsymbol{u} }^a_2)$ for the data sets $z$ and $\tilde z$, respectively.
We observe that the marginal distributions of the centered ensembles coincide, in agreement with Theorem~\ref{propo:LCM}.

\begin{figure}[h]
\hfill
\begin{minipage}{0.32\textwidth}
	\includegraphics[width = \textwidth]{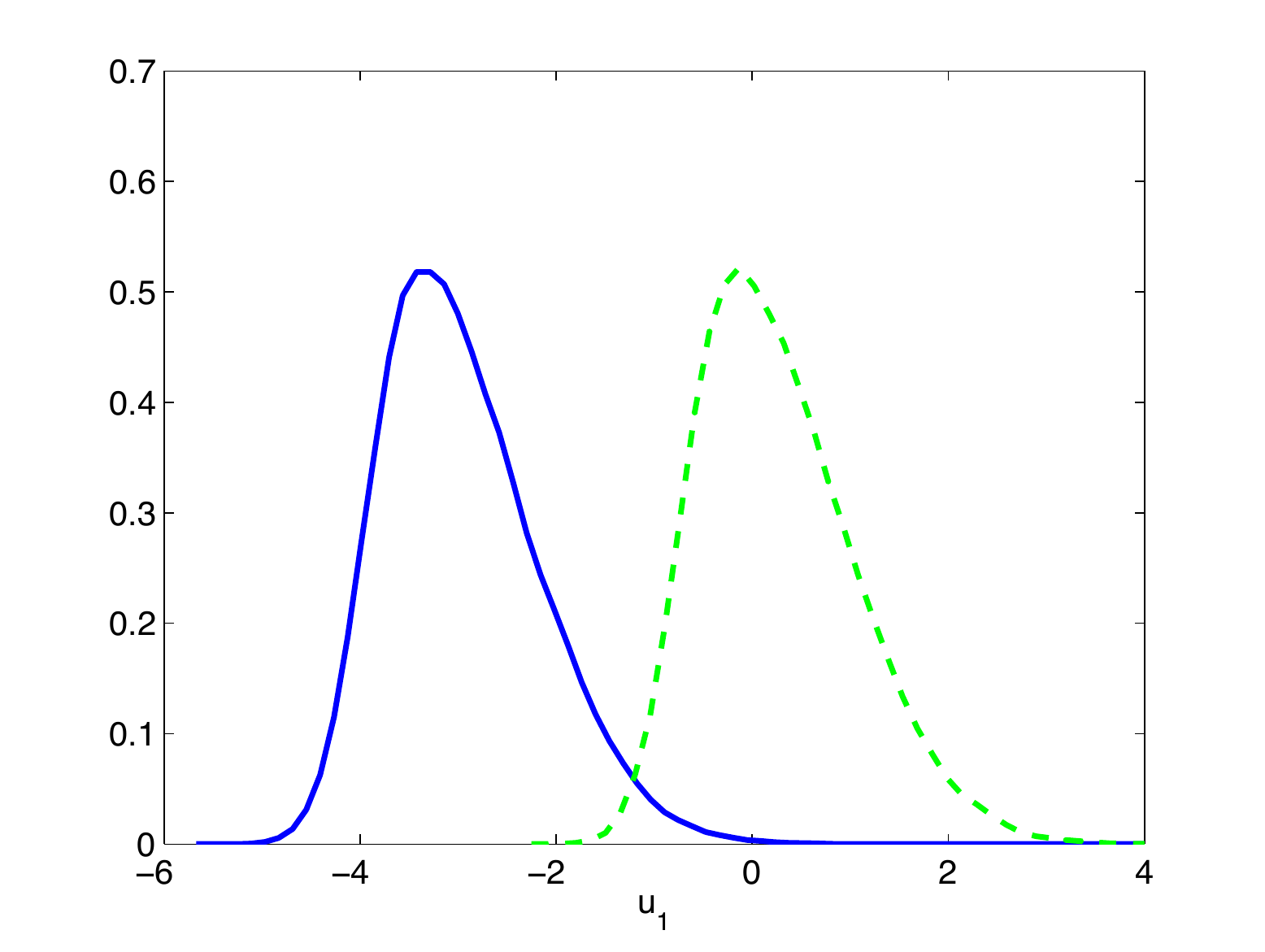}
\end{minipage}
\hfill
\begin{minipage}{0.32\textwidth}
	\includegraphics[width = \textwidth]{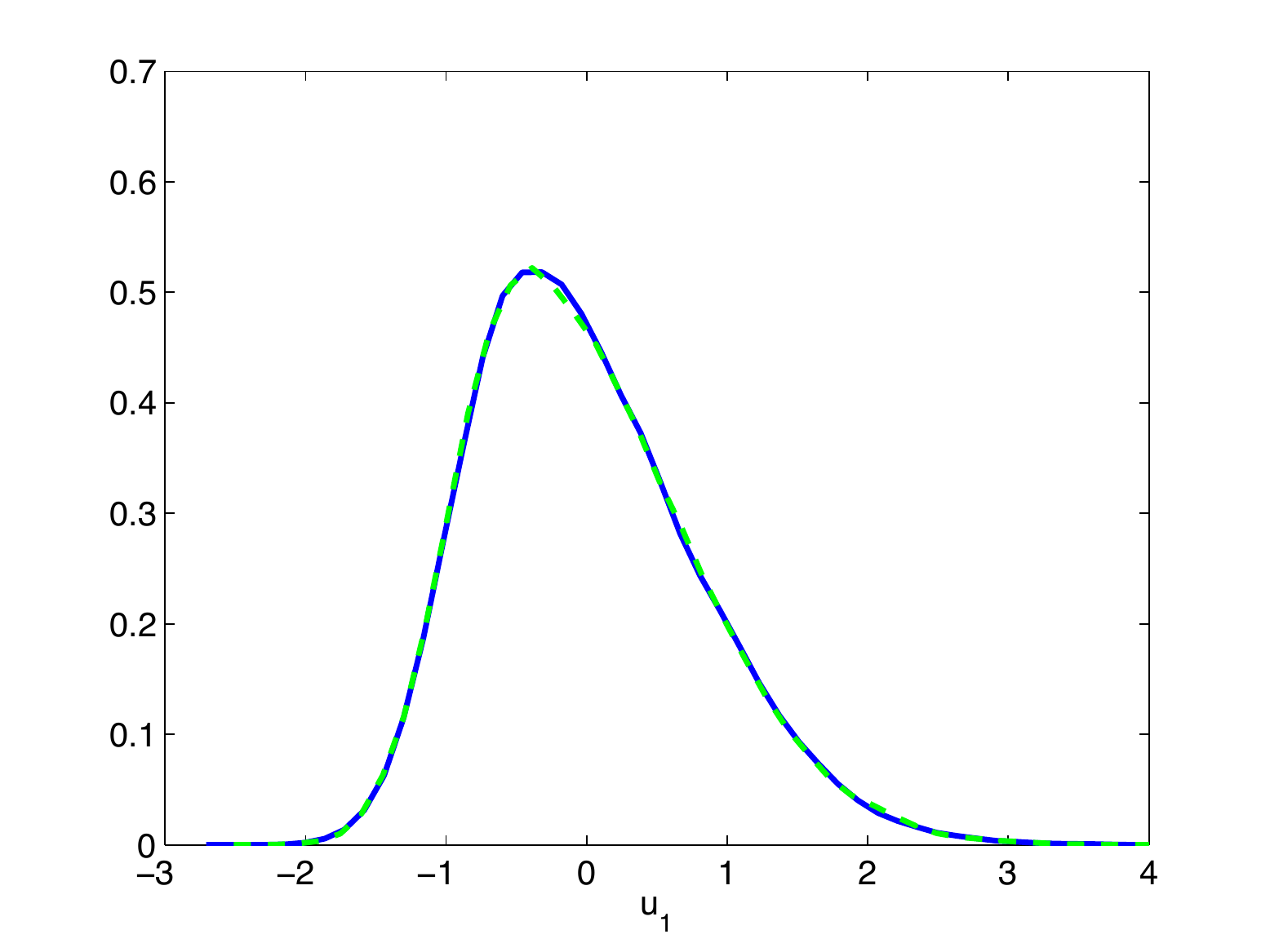}
\end{minipage}
\hfill
\begin{minipage}{0.32\textwidth}
	\includegraphics[width = \textwidth]{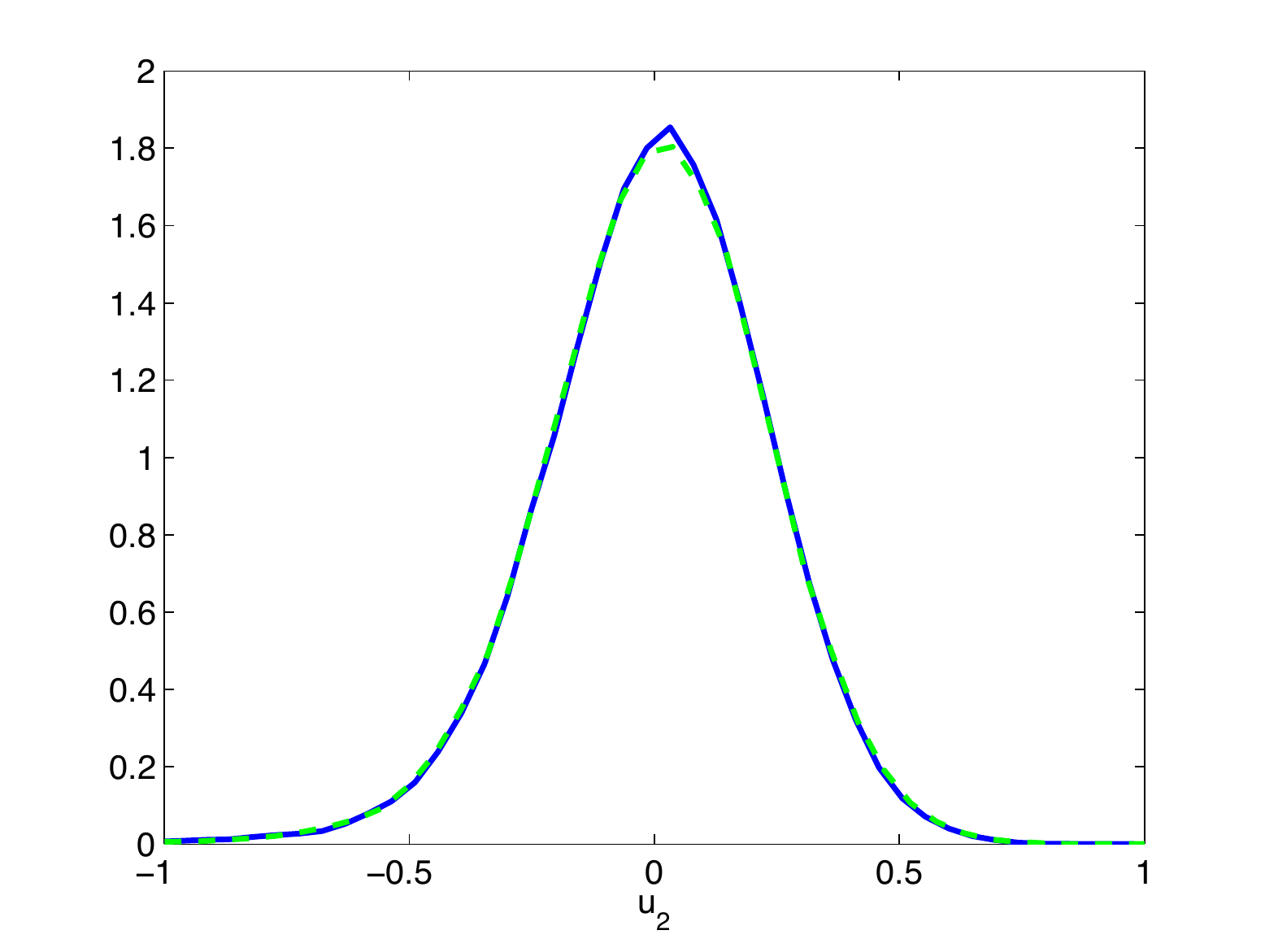}
\end{minipage}
\hfill
\caption{Left: Kernel density estimates for $\boldsymbol{u}^a_1$ (\textit{blue, solid line}) and $\tilde{ \boldsymbol{u} }^a_1$ (\textit{green, dashed line}). \newline
Middle, Right: Kernel density estimates for $\boldsymbol{u}^a_i - \bbE[\boldsymbol{u}^a_i]$  (\textit{blue, solid}) and $\tilde{ \boldsymbol{u} }^a_i - \bbE[\tilde{ \boldsymbol{u} }^a_i]$ (\textit{green, dashed}), $i=1,2$.}
\label{fig:exam2_marg_obs}
\end{figure}

In addition, whenever the prior and thus also the posterior support for $u_2$ is bounded -- as in this example -- the EnKF may yield members in the analysis ensemble which are outside this support.
This is a further consequence of Theorem \ref{propo:LCM}:
Since the analysis ensemble of the EnKF follows the distribution of the analysis variable rather than the true posterior distribution, ensemble members lying outside the posterior support can always occur whenever the support of the analysis variable is not a subset of the support of the posterior. \par
Finally, we would like to stress that, whether or not the distribution of the analysis variable is a good fit to the true posterior distribution  depends entirely on the observed data --- which can neither be controlled nor known a priori.

Applying the PCKF to this simple example problem can be done analytically.
We require four basic independent random variables $\xi_1 \sim N(0,1)$, $\xi_2 \sim \text{Uni}(0,1)$, $\xi_3 \sim N(0,1)$ and $\xi_4 \sim N(0,1)$ to define PCEs which yield random variables distributed according to the prior and error distributions:
\[
	U := 	(\xi_1, \; 90 + 20\xi_2)^\top \sim \mu_0, \qquad \varepsilon := (0.1\xi_3, \; 0.1\xi_4)^\top \sim \nu_\varepsilon.
\]
Moreover, due to \eqref{equ:1D_PDE_sol}, $G(U)$ is also available in closed form as
\[
	G(U) =
	\begin{pmatrix}
	c_{11} (90 + 20\xi_2) + c_{12} \sum_{n=0}^\infty (-1)^n \frac{\sqrt{\E}}{\sqrt{n!}} H_n(\xi_1)\\
	c_{21} (90 + 20\xi_2) + c_{22} \sum_{n=0}^\infty (-1)^n \frac{\sqrt{\E}}{\sqrt{n!}} H_n(\xi_1)
	\end{pmatrix},
\]
where $H_n$ denotes the $n$th normalized Hermite polynomial and $c_{11},c_{12},c_{21},c_{22}$ can be deduced from inserting $x=0.25$ and $x=0.75$ into \eqref{equ:1D_PDE_sol}.
Here we have used the
Hermite expansion of $\exp(-\xi)$, see also \cite[Example 2.2.7]{Ullmann2008}.
Thus, the chaos coefficient vectors of $U$ and $G(U) + \varepsilon$ w.r.t.\ the polynomials
\[
	P_{\boldsymbol{\alpha}}(\boldsymbol{\xi})
	=
	H_{\alpha_1}(\xi_1) \, L_{\alpha_2}(\xi_2) \, H_{\alpha_3}(\xi_3) \, H_{\alpha_4}(\xi_4),
	\qquad \boldsymbol\alpha\in\bbN_0^4,
\]
can be obtained explicitly where $H_\alpha$ and $L_\alpha$ denote the
normalized Hermite and Legendre polynomials of degree $\alpha$, respectively.
In particular, the nonvanishing chaos coefficients involve only the basis polynomials
\[
	P_0(\boldsymbol{\xi}) \equiv 1, \quad
	P_1(\boldsymbol{\xi}) = L_1(\xi_2), \quad
	P_2(\boldsymbol{\xi}) = H_1(\xi_3), \quad
	P_3(\boldsymbol{\xi}) = H_1(\xi_4)
\]
and $P_\alpha(\boldsymbol{\xi}) = H_{\alpha-3}(\xi_1)$ for $\alpha \geq 4$.
Arranging the two-dimensional chaos coefficients of $U$ and $G(U)$ as the column vectors of the matrices $[U], [G(U)+\varepsilon] \in \bbR^{2\times \bbN_0}$, and denoting by $\widetilde{[U]}$ the matrix $[u_1, u_2, \ldots]\in \bbR^{2\times \bbN}$ we get
\[
	K = \widetilde{[U]}\widetilde{[G(U)]}^\top \left( \widetilde{[G(U)]} \widetilde{[G(U)]}^\top
	+ 0.01 I_2 \right)^{-1}.
\]
Thus, the only numerical error incurred in applying the PCKF in this example is the truncation of the PCE.
We have carried out this calculation using a truncated PCE of length $J = 4+50$ according to the reduced basis above.
In particular, we evaluated the approximation $K_J$ to $K$ by using the truncated vector $[\mathrm P_J G(U)]$ in the formula above and then performed the update of the chaos coefficients according to \eqref{equ:PCEKF_update}.
After that $M=10^5$ samples of the resulting random variable $U_J^a$ were drawn, but since the empirical distributions were essentially indistinguishable from those obtained by the EnKF described previously, they are omitted here.

\begin{remark}
Although a detailed complexity analysis of these methods is beyond the scope of this work, we mention that the EnKF calls for $M$ evaluations of the forward map $G(u_j)$, $j=1,\ldots,M$, whereas the PCKF requires computing the chaos coefficients of $G(U)$ by, e.g., the stochastic Galerkin method.
Thus the former yields, in general, many small systems to solve, whereas the latter typically requires the solution of a large coupled system.
Moreover, we emphasize the computational savings by applying Kalman filters compared to a ``full Bayesian update'', i.e., sampling from the posterior measure by MCMC methods.
In particular, each MCMC run may require calculating many hundreds of thousands forward maps $G(u)$, e.g., for each iteration $u_j$ of the Markov chain as in the case of Metropolis-Hastings MCMC.
Hence, if one is interested in only the posterior mean as a Bayes estimate, then EnKF and PCKF provide
substantially less expensive alternatives to MCMC for its approximation by the linear posterior mean.
\end{remark}

\subsection{Dynamical System: RLC circuit} \label{sec:Exam2}

We apply the EnKF to sequential data assimilation in a simple dynamical system: a damped LC-circuit or RLC-circuit.
Denoting the initial voltage by $U_0$, the resistance by $R$, the inductance by $L$ and the capacitance by $C$, and assuming $R < 2\sqrt{LC}$, the voltage and current in the circuit can be modelled as
\begin{subequations} \label{equ:RLC}
	\begin{equation}
		U(t) = U_0 \; \E^{\delta t} \; \big(\cos(w_et) + \frac {\delta}{w_e} \sin(w_e t)\big),
	\end{equation}
	\begin{equation}
		I(t) = - \frac{U_0}{w_e L} \; \E^{\delta t} \; \sin(w_e t),
	\end{equation}
\end{subequations}
where $\delta = R/(2L)$, $w_e = \sqrt{w_0^2-\delta^2}$ and $w_0 = 1/\sqrt{LC}$.
The data assimilation setting is now as follows.
We observe the state of the system \eqref{equ:RLC} at four time points $t_n = 5n$, $n=1,\ldots,4$, where all observations $z \in \bbR^8$ are corrupted by measurement noise $\varepsilon \sim N(0, \diag(\sigma_1^2,\ldots,\sigma^2_8))$.
Here we have chosen $\sigma^2_{2n-1} = 0.1 |U(t_n)|$ and $\sigma^2_{2n} = 0.1 |I(t_n)|$ for $n=1,\ldots,4$.
We want to infer $U_0$ and $L$ based on these observations, i.e, the unknown is $u = (U_0,L)$, and we take as prior $(U_0,L) \sim N(0.5, 0.25) \otimes \text{Uni}(1,5)$.
Given observations $z \in \bbR^8$ we
compare two assimilation strategies for applying the EnKF:
\begin{itemize}
\item
\emph{Simultaneous}: We apply the EnKF to the inverse problem
\[
	z = G(u) + \varepsilon,
\]
where $G$ maps $(U_0,L)$ to the states $(U(t_1), I(t_1),\ldots,U(t_4),I(t_4)) \in \bbR^8$.
Thus, we perform one EnKF update using all the available data at once, resulting in one EnKF analysis ensemble.

\item
\emph{Sequential}: We apply the EnKF to the inverse problem
\[
	z_n = G_n(u) + \varepsilon_n, \qquad n=1,\ldots,4,
\]
where $G_n$ maps $(U_0,L)$ to the state $(U(t_n), I(t_n)) \in \bbR^2$.
In particular, we will perform four EnKF updates using at each update only the corrupted data $z_n = (U(t_n)+\varepsilon_{2n-1}, I(t_n)+\varepsilon_{2n})$.
This yields, for each update, one EnKF analysis ensemble which, in turn, serves as the initial ensemble for the next update.
\end{itemize}

Again we use two different data sets $z, \tilde z$\footnote{$z=(0.505, 0.237, 0.014, 0.096, 0.036, 0.011, -0.002, -0.003), \tilde z = (0.265$, $0.066$, $0.058$, $0.002$, $0.021$, $0.012$, $0.007$, $-0.01)$}, obtained by two realizations of $\varepsilon$ given the solution of \eqref{equ:RLC} for $U_0 = 0.75, R = 0.5, L = 1.5, C = 0.5$.

%

The resulting posteriors and EnKF analysis ensembles for the simultaneous and sequential update are presented in Fig. \ref{fig:RLC_post}.
\begin{figure}[h]
\includegraphics[width = \textwidth]{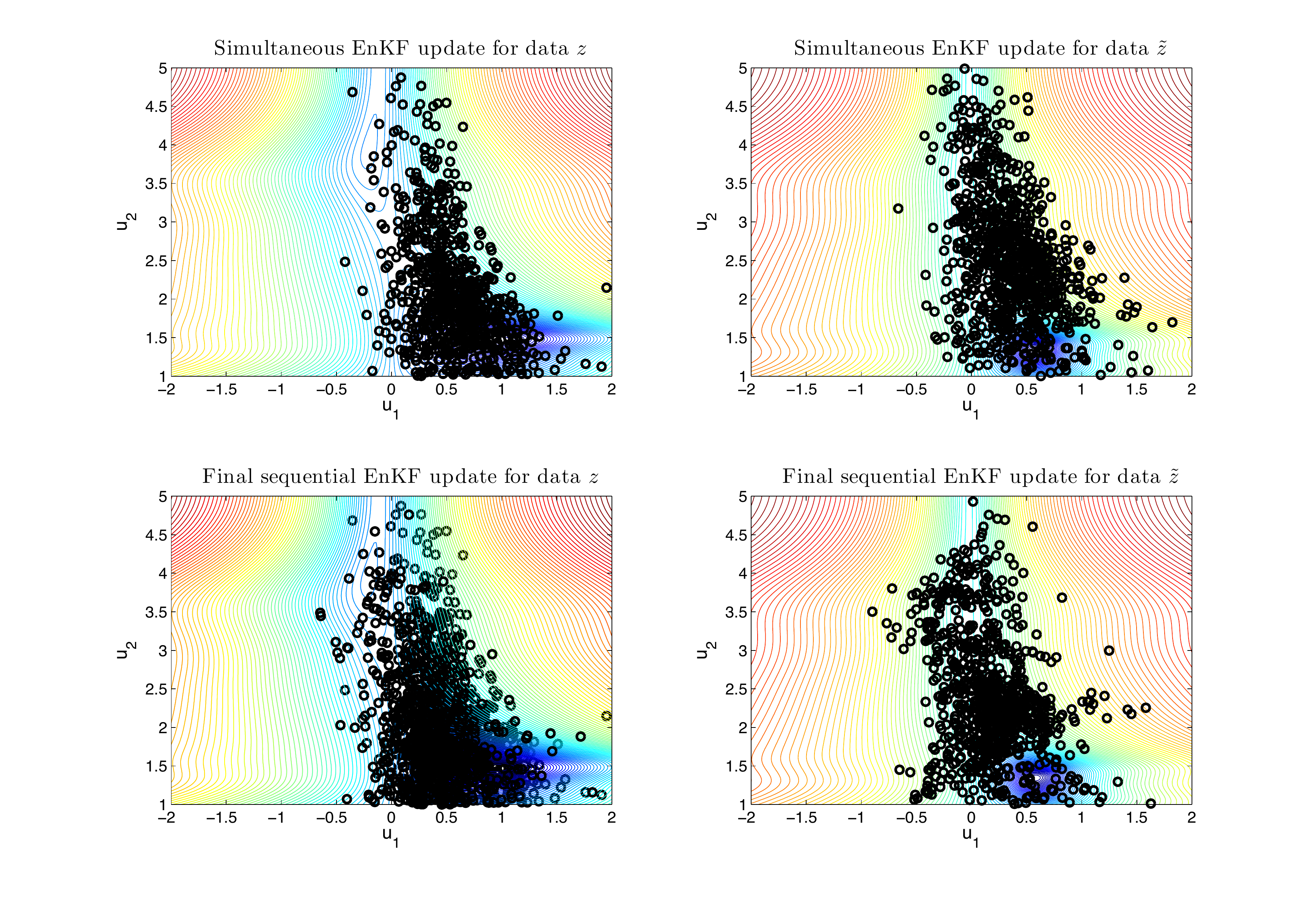}
\caption{Contours of the logarithm of the negative log posterior density and locations of $1,000$ members of the analysis EnKF-ensembles resulting from simultaneous and sequential updating for the two different sets $z$ and $\tilde z$ of observation data.}
\label{fig:RLC_post}
\end{figure}
We observe again that, for different data sets, the EnKF results in an ensemble which follows a distribution
which is, in one case, quite close and, in the other, quite far away from the true posterior distribution.
Interestingly, the difference between the two updating schemes does not seem to be too large.
This also holds  true for the means of the EnKF analysis ensembles when compared to the true posterior means in Table \ref{tab:RLC_1} for both data sets $z$ and $\tilde z$.

\begin{table}
\centering
\begin{tabular}{c|c|c||c|c}
update & EnKF mean & posterior mean & EnKF mean & posterior mean\\
& for data $z$ & for data $z$ & for data $\tilde z$ & for data $\tilde z$\\ \hline
1			& (0.42, 1.56)	& (0.42, 2.42)	& (0.27, 2.25)	& (0.35, 2.61)\\
2			& (0.44, 1.53)	& (0.39, 2.36)	& (0.20, 2.20)	& (0.32, 2.56)\\
3			& (0.43, 1.59)	& (0.38, 2.34)	& (0.19, 2.26)	& (0.31, 2.52)\\
4			& (0.43, 1.59)	& (0.38, 2.32)	& (0.19, 2.24)	& (0.30, 2.50)\\
\text{Simu.}	& (0.58, 1.84)	& (0.38, 2.32)	& (0.38, 2.40)	& (0.30, 2.50)
\end{tabular}
\caption{Means of the EnKF analysis ensembles and corresponding true posterior means for data $z$ (left) and $\tilde z$ (right).}\label{tab:RLC_1}
\end{table}

%
%

Finally, we are again interested in the marginals of the posterior and the associated histograms of the EnKF analysis ensembles which give us a rough impression of the difference between the distribution of the analysis variable and the true posterior.
In Fig. \ref{fig:RLC_Hist_AAO} we compare both marginals for the $4$th update and the simultaneous analysis ensemble for both data sets.
The distribution of the simultaneous EnKF analysis ensemble should not depend on the data whereas the distribution of the final EnKF analysis ensemble for the sequential updating clearly does in this example.
This is certainly caused by the nonlinearity of the forward map $G$: in the sequential updating the former analysis variable $U^a_n$ serves as initial one for the current update step $n+1$, therefore, the difference in the mean of the former analysis variables $U^a_n, \tilde U^a_n$ for different data sets $z, \tilde z$ might yield different forecast RVs $G(U^a_n), G(\tilde U^a_n)$ due to the nonlinearity of $G$ which yields different next analysis variables $U^a_{n+1}, \tilde U^a_{n+1}$.

\begin{figure}[h]
\hfill
\begin{minipage}{0.49\textwidth}
	\includegraphics[width = \textwidth]{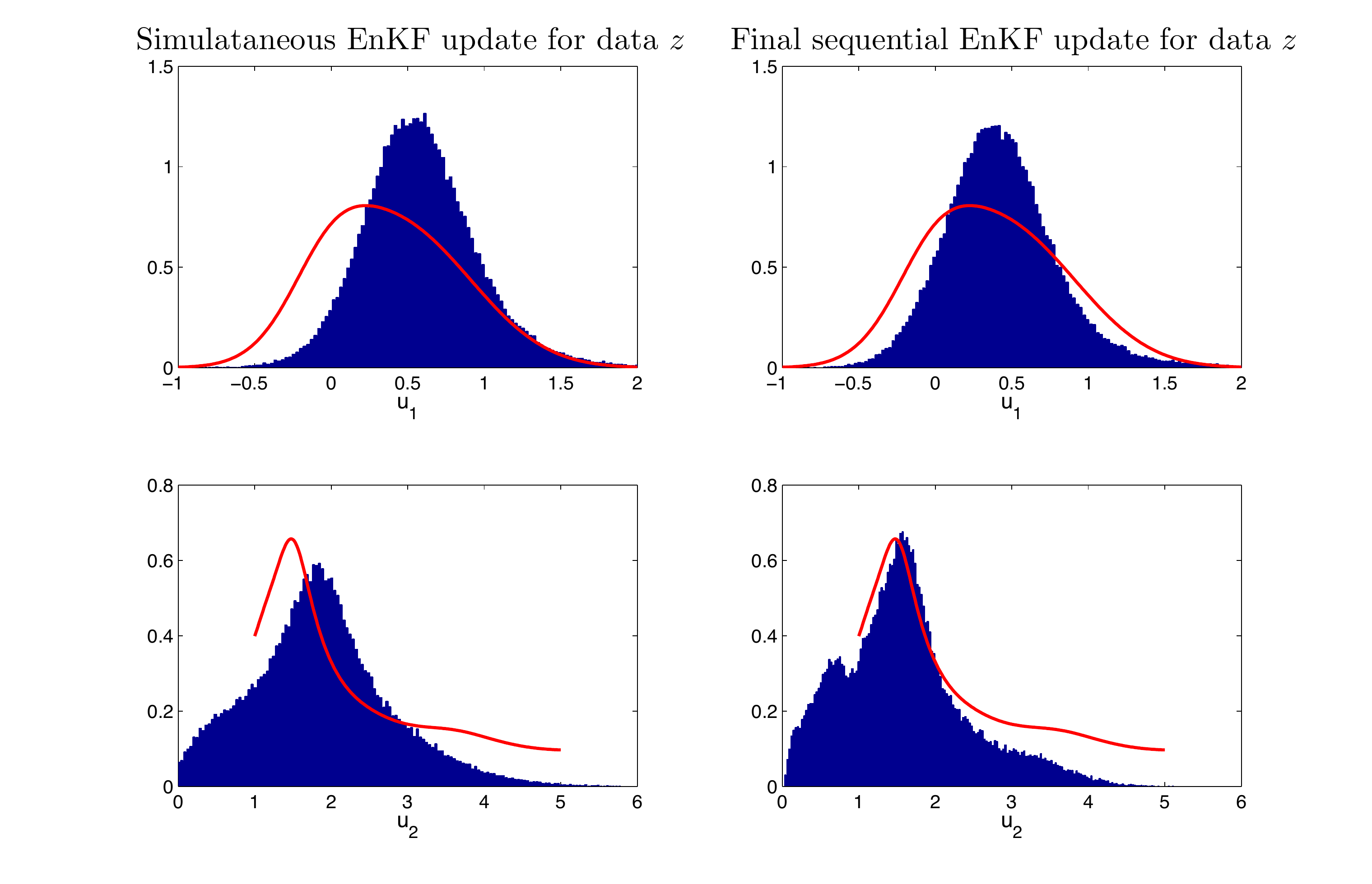}
\end{minipage}
\hfill
\begin{minipage}{0.49\textwidth}
	\includegraphics[width = \textwidth]{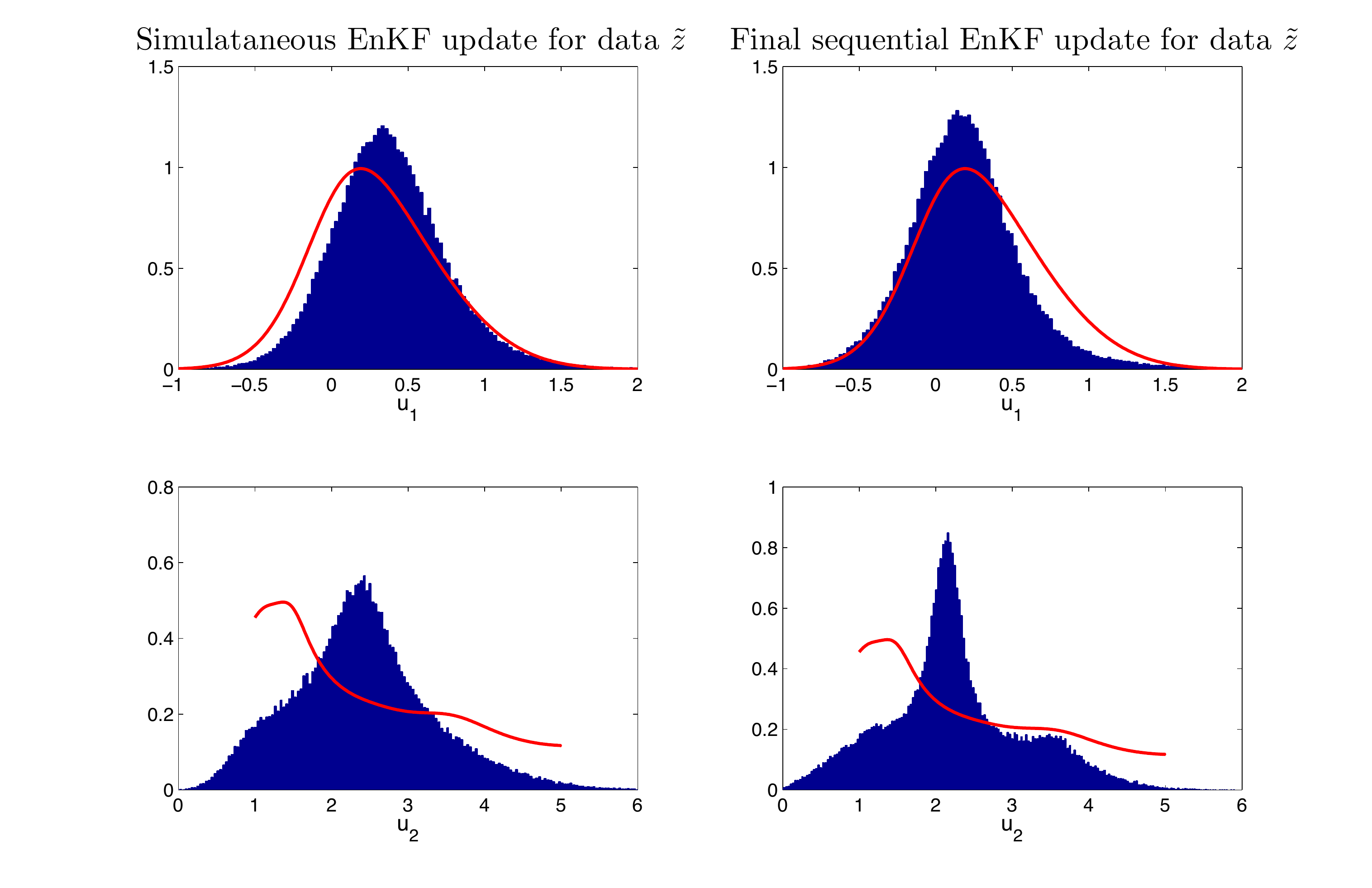}
\end{minipage}
\hfill
\caption{Posterior marginals and relative frequencies of the final EnKF analysis ensembles in $u_1$, $u_2$ for $z$ (left part) and $\tilde z$ (right part).}
\label{fig:RLC_Hist_AAO}
\end{figure}

%
%
\section{Conclusions} \label{sec:Conclusions}

We have given a detailed analysis of two popular generalized Kalman filtering methods, the EnKF and PCKF, applied to nonlinear (stationary) Bayesian inverse problems.
We recalled the Bayesian approach to inverse problems and its solution, the posterior measure, in a Hilbert space setting, for which we slightly generalized existing results concerning the well-posedness of Bayesian inverse problems.
Further, in order to characterize Kalman filter methods in the Bayesian framework, we also described Bayes estimators and highlighted    the distinction between the two objectives of inference and identification in Bayesian inversion realized by the posterior measure and Bayes estimators, respectively.
We then proved the convergence of the approximations provided by the EnKF and PCKF to a so-called analysis random variable in the large ensemble and large polynomial basis limit, respectively, reaffirming the fact that both methods are merely different numerical discretizations of the same updating scheme for random variables.
Moreover, the relation of both Kalman filter methods to a specific Bayes estimator, the linear posterior mean estimator, followed from this.
Hence, this work shows that the EnKF and PCKF are methods suited for identification -- providing in addition the random a priori estimation error --  rather than methods for rigorous inference in the sense of (regular versions of) the conditional measure.
Several carefully chosen numerical examples were given to illustrate these basic differences.

\bibliographystyle{siam}
\bibliography{literature_ernst_sprungk_starkloff}

\end{document}